 \newcommand{\mymarginpar}[1]{%
    \marginpar{\ifthenelse{\isodd{\arabic{page}}}{\flushleft 
#1}{\flushright #1}}}
\numberwithin{equation}{section}
\newcommand{\sing}[1]{\text{sing}(#1)}
\newcommand{\reg}[1]{\text{reg}(#1)}
\newcommand{\source}[1]{\text{source}(#1)}
\newcommand{\IN}{\mathbb{N}}
\newcommand{\IT}{\mathbb{T}}                  
\newcommand{\IZ}{\mathbb{Z}}
\newcommand{\CK}{\mathcal{K}}
\newcommand{\CT}{\mathcal{T}}
 \theoremstyle{plain} 
 \newtheorem{Theorem}{Theorem}[section]
 \newtheorem{Lemma}[Theorem]{Lemma}
 \newtheorem{Proposition}[Theorem]{Proposition}
 \newtheorem{Corollary}[Theorem]{Corollary}
 \theoremstyle{definition} 
 \newtheorem{Definition}[Theorem]{Definition}
 \newtheorem{Remark}[Theorem]{Remark}
 \newtheorem{Example}[Theorem]{Example}
\newtheorem{Notation}[Theorem]{Notation}
\begin{document}
\author{Samantha Brooker}
\author{Jack Spielberg}
\title{Relative graphs and pullbacks of relative Toeplitz graph algebras}
\address{School of Mathematical and Statistical Sciences \\ Arizona State University \\ P.O. Box 871804 \\ Tempe, AZ 85287-1804}
\email{sjbrooke@asu.edu}
\email{jack.spielberg@asu.edu}
\subjclass[2020]{Primary 46L05}
\keywords{Toeplitz graph algebra, \'etale groupoid, relative graph, pullback $C^*$-algebra, gauge action}

\begin{abstract}

In this note we generalize a result from a recent paper of Hajac, Reznikoff and Tobolski (\cite{hrt}). In that paper they give conditions they call \textit{admissibility} on a pushout diagram in the category of directed graphs implying that the $C^*$-algebras of the graphs form a pullback diagram. We consider a larger category of \textit{relative graphs} that correspond to relative Toeplitz graph algebras. In this setting we give necessary and sufficient conditions on the pushout to get a pullback of $C^*$-algebras.

\end{abstract}

\maketitle

\section{Introduction}
\label{sec intro}

In a recent paper of Hajac, Reznikoff and Tobolski (\cite{hrt}) the authors give conditions they call \textit{admissibility} on a pair of subgraphs of a directed graph implying that the $C^*$-algebras of the three graphs fit into a pullback diagram. The admissibility implies, in particular, that the $C^*$-algebra of each subgraph is a quotient of the $C^*$-algebra of the ambient graph, or, equivalently, that the complement of the vertex set of each subgraph is \textit{saturated} and \textit{hereditary}. In order to accomodate graphs that are not row finite they also require that the hereditary sets be \textit{unbroken}, meaning that neither hereditary set admits any \textit{breaking vertices} (see Definition \ref{def saturated}). The breaking vertices form a crucial part of the classification of gauge-invariant ideals of graph algebras (\cite{bhrs}). Admissibility in \cite{hrt} implies that the saturated hereditary set corresponding to each subgraph is associated to a unique gauge-invariant ideal in the $C^*$-algebra of the ambient graph, and that the quotient by this ideal is isomorphic to the $C^*$-algebra of the subgraph. It is shown in \cite{bhrs} that in the presence of breaking vertices, a saturated hereditary set is associated with several gauge-invariant ideals, corresponding to the subsets of the breaking vertices. Except in the case where the entire set of breaking vertices is used, the quotient $C^*$-algebra will not equal the algebra of a subgraph. In \cite{bhrs}, for each gauge-invariant ideal a new graph is constructed whose $C^*$-algebra does equal the quotient by that ideal. However, these quotients can in fact be presented as $C^*$-algebras associated to actual subgraphs by considering \textit{relative Toeplitz graph algebras} (see Definition \ref{def relative toeplitz algebra}). (Relative Toeplitz graph algebras have been considered previously in \cite{spi1},\cite{mt},\cite{sims},\cite{ek}.)

In this paper we show that the question of obtaining a pullback diagram of $C^*$-algebras from a pushout diagram of graphs extends naturally to the context of relative Toeplitz graph algebras. This makes use of all of the gauge-invariant ideals of $\CT C^*(E)$. We introduce a corresponding category of \textit{relative directed graphs} and a contravariant functor from it to $C^*$-algebras. This is an extension of the results of \cite{bhrs}. We prove that pushouts exist in this category, and then characterize those pushout diagrams that give rise to pullback diagrams of $C^*$-algebras. We call such diagrams \textit{admissible} in imitation of the notion in \cite{hrt}, although it is quite different from that definition. Essentially, most of the properties coming from the admissibility conditions of \cite{hrt} are incorporated in our definition of morphism in the category of relative directed graphs. Our admissibility condition has to do with the variety of gauge-invariant ideals available in the case of arbitrary graphs.

We briefly describe the contents of the paper. In section \ref{section relative graph algebras} we recall the terminology of directed graphs as used in operator algebras. In particular we recall the relative Toeplitz graph algebras. The standard definition is by means of generators and relations (the \textit{Cuntz-Krieger relations}). However these algebras are also described as $C^*$-algebras of certain \'etale groupoids, and we find it advantageous to work with the groupoid description.  In section \ref{section groupoid picture} we recall this description as given in \cite{spi2}. (This is slightly different from the original groupoid description.) We give the definitions of the usual graph algebras and Toeplitz graph algebras, as well as that of the relative Toeplitz graph algebras, and describe the ideals relevant to this paper in terms of open invariant subsets of the unit space of the groupoid. In section \ref{section gauge-invariant ideals} we prove that these are, in fact, all of the gauge-invariant ideals in the relative Toeplitz graph algebras, giving a natural extension of the results of \cite{bhrs}. We have tried to keep this argument as general as possible for as long as possible, so that the specifics of graph algebras are needed only in the last part of the argument. In the end we give a simple characterization of the gauge-invariant ideals in terms of subsets of the graph, generalizing the characterization of such ideals in graph algebras from \cite{bhrs}. We note that while a more general theorem is proved in \cite{sww} for higher rank graphs, the characterization in that generality is quite complicated, and indeed, even translating it to the case of directed graphs is complicated. Thus we feel that our direct proof and simple characterization for directed graphs justify the inclusion of our treatment. An equivalent version of this classification was given in \cite[Theorem 2.11]{ek}, with a simple proof using the result of \cite{bhrs}. (We thank the anonymous referee for bringing the article \cite{ek} to our attention.) Our treatment bypasses the result of \cite{bhrs}, and we hope that our methods may be useful in more general situations. In section \ref{section relative graphs} we introduce a new category of \textit{relative graphs}.  The objects are pairs consisting of a directed graph and a certain subset of the vertex set. The definition is exactly what is needed to describe the relative Toeplitz graph algebras in terms of the graphs. We prove that it is indeed a category, and that pushouts exist in this category. In section \ref{section admissible pushouts of relative graphs} we first recall a theorem of Pedersen to characterize pullback diagrams with surjective maps. We then give our definition of admissible pushout diagram in the category of relative graphs, and use it to prove our main result, Theorem \ref{thm main}. In the final section \ref{section examples} we consider several examples to illustrate the main theorem. In particular we consider the two extreme situations: where all four algebras are Toeplitz graph algebras, and where all four algebras are graph algebras. In the second of these we compare our theorem with the theorem of \cite{hrt}.

We wish to place our results in the context of the paper \cite{kpsw}. That paper treats pullbacks of the Toeplitz and Cuntz-Krieger algebras of finitely aligned higher-rank graphs (without sources) by means of a connected sum of the graphs. Apart from the restriction to sourceless graphs (which is a reasonable restriction in the higher-rank case), this is much more general than the situation of this paper. The new feature in our treatment is to consider pullbacks of relative Toeplitz graph algebras. It would be interesting to work out the pullback structure for relative Toeplitz algebras of higher-rank graphs using the results of \cite{sww}.

Throughout this paper we use the \textit{Australian convention} for the $C^*$-algebras of directed graphs, as presented in \cite{rae}. Thus edges of the graph are thought of as morphisms in a (small) category, and concatenation of edges corresponds to composition of morphisms. The result is that when the vertices and edges of a directed graph are represented as projections and partial isometries in a $C^*$-algebra, the source vertex of an edge is represented by the projection corresponding to the initial space of the partial isometry representing that edge.

\section{Relative Toeplitz graph algebras} \label{section relative graph algebras}

\begin{Definition}
A \textit{directed graph} is a quadruple $E = (E^0, E^1, r, s)$, where $E^0$ is the set of vertices, $E^1$ is the set of directed edges, and $r, s: E^1 \to E^0$ are the range and source maps, respectively, so that if $e \in E^1$ is an edge from $v$ to $w$, then $r(e) = w$ and $s(e) = v$. We then say that $v$ \textit{emits} the edge $e$, and that $w$ \textit{receives} the edge $e$. We let $E^n$ denote the set of directed paths in $E$ consisting of $n$ edges, $E^* = \bigcup_{n=0}^\infty E^n$, the set of all directed paths of finite nonnegative length, and $E^\infty$ the set of all (semi) infinite directed paths. Thus $\alpha \in E^n$ means that $\alpha = e_1 e_2 \cdots e_n$, where $e_1, \ldots, e_n \in E^1$ and $s(e_i) = r(e_{i+1})$ for $1 \le i < n$, and $x \in E^\infty$ means that $x = e_1 e_2 \cdots$, where for each $n$, $e_1 \cdots e_n \in E^n$. We extend the source and range maps to $E^*$ and $E^\infty$ by setting (in the above) $s(\alpha) = s(e_n)$, $r(\alpha) = r(x) = r(e_1)$. For $\alpha \in E^*$ we write $\alpha E^* := \{\alpha \beta : \beta \in E^*, r(\beta) = s(\alpha) \}$, and similarly for $E^* \alpha$, $\alpha E^\infty$, and if $x \in E^\infty$, for $E^* x$. Thus we may write $s^{-1}(v) = v E^1$ for $v \in E^0$ to avoid specifying to which version of $s$ we refer, and for $A \subseteq E^* \cup E^\infty$ we write $E^* A = \bigcup_{\alpha \in A} E^* \alpha$, etc. We write $|\alpha|$ for the \textit{length} of $\alpha$: $|\alpha| = n$ for $\alpha \in E^n$.

A \textit{source} is a vertex $v \in E^0$ that receives no edges, that is, $v E^1 = \emptyset$.

An \textit{infinite receiver} is a vertex $v \in E^0$ such that $v E^1$ is infinite. $E$ is called \textit{row-finite} if it has no infinite receivers.

A vertex $v$ is said to be \textit{singular} in $E$ if $v$ is either a source or an infinite receiver, and is said to be \textit{regular} in $E$ if it is not singular. We denote the set of singular vertices in $E$ by $\sing{E}$, the set of regular vertices in $E$ by $\reg{E}$, and the set of sources in $E$ by $\source{E}$.
\end{Definition}

All graphs we consider are directed graphs, so we will usually say ``graph'' rather than ``directed graph.''

\begin{Definition} \label{def relative toeplitz algebra}
Let $E$ be a graph and let $A$ be a $C^*$-algebra. A \textit{Cuntz-Krieger E-family} in $A$ consists of a set of projections $\{P_v: v \in E^0\} \subseteq A$ such that $P_v P_w = 0$ whenever $v \neq w$ (we say that $\{P_v\}$ is a set of \textit{mutually orthogonal projections}) and a set of partial isometries $\{S_e : e \in E^1\} \subseteq A$ satisfying the \textit{Cuntz-Krieger relations:}
\begin{itemize}
    \item[(CK1)] for all $e \in E^1$, $S_e^* S_e = P_{s(e)}$
    \item[(CK2)] for all $e$, $f \in E^1$, if $e \not= f$ then $S_e^*S_f = 0$
    \item[(CK3)] for all $e \in E^1$, $P_{r(e)} S_e = S_e$
    \item[(CK4)] for all $v \in \reg{E}$,
    $\displaystyle P_v =  \sum_{e \in v E^1} S_eS_e^*$.
\end{itemize}
The \textit{Cuntz-Krieger algebra}, or \textit{graph algebra}, of $E$ is the $C^*$-algebra generated by a universal Cuntz-Krieger $E$-family, and is denoted $C^*(E)$. Because of its special significance, we refer to the fourth relation (CK4) as \textit{the Cuntz-Krieger condition (at $v$)}. The \textit{Toeplitz graph algebra} of $E$ is the $C^*$-algebra defined as above but without the relation (CK4), and is denoted $\CT C^*(E)$. Finally, for a subset $A \subseteq \reg{E}$, the \textit{relative Toeplitz graph algebra} is defined as above but with the relation (CK4) applied only at vertices $v \in A$, and is denoted $\CT C^*(E,A)$. Thus $\CT C^*(E,A)$ is defined by the relations (CK1) - (CK3) together with 
\begin{itemize}
    \item[(TCK4)] for all $v \in A$, $\displaystyle P_v = \sum_{e \in v E^1} S_e S_e^*$.
\end{itemize}
\end{Definition}

We note that $\CT C^*(E) = \CT C^*(E,\varnothing)$, and  $C^*(E) = \CT C^*(E,\reg{E})$.

Central to the ideas of this paper is a contravariant functor from graphs to $C^*$-algebras, specifically relative Toeplitz graph algebras. These algebras were introduced in \cite{spi1} to describe subalgebras corresponding to subgraphs (see also \cite{mt}, \cite{ek}). From this, in fact, there is also a covariant functor from \textit{relative graphs} (see Definition \ref{definition relative graph}) to $C^*$-algebras (discussed in the purely algebraic context in \cite[Section 1.5]{aas}) but the present work does not require this.

To describe the contravariant functor that is of interest to us, we will need some more definitions.

\begin{Definition} \label{def hereditary}

A set $H \subseteq E^0$ is \textit{hereditary} if whenever $v \in H$ and $w \in s(v E^*)$ then $w \in H$. Let $E$ be a graph and let $H \subseteq E^0$ be a hereditary subset. Define a subgraph $F \equiv F_H$ of $E$ by letting $F^0 = E^0 \setminus H$ and $F^1 = F^0 E^1 F^0$ $(= E^1 F^0$ since $F^0$ is \textit{cohereditary}). For $v \in \reg{F}$ define $p_{v,H} = \sum_{e \in v F^1} s_e s_e^*$.

\end{Definition}

\begin{Definition} \label{def saturated}

$H$ is \textit{saturated} if for each vertex $v \in \reg{E}$, if $v E^1 = v E^1 H$ then $v \in H$, that is, if every vertex that sends an edge to $v$ is in $H$, then $v$ must be in $H$.

If $H$ is both hereditary and saturated and $F = F_H$, define $B_H = \reg{F} \cap \sing{E}$, the \textit{breaking vertices} for $H$. Thus $v$ is a breaking vertex for $H$ if $v$ receives infinitely many edges with source in $H$, and receives a finite nonzero number of edges with source in $F^0$ (note that $v$ must be a vertex of $F$ by the hereditary property of $H$). If necessary we may write $B^E_H$ to indicate the ambient graph. (In \cite{bhrs} the set $B_H$ is written $H_\infty^{\text{fin}}$.)

\end{Definition}

\begin{Remark} \label{remark saturated}

The definition of saturated $H \subseteq E^0$ can be stated equivalently as $\reg{E} \cap F_H^0 \subseteq \reg{F_H}$ (or $\reg{E} \cap \source{F_H} = \varnothing$).
The definition of hereditary $H$ can be stated equivalently as $H E^* = H E^* H$, or as $H E^1 = H E^1 H$.
\end{Remark}

We can now begin to describe the contravariant functor mentioned above. Let $E$ be a graph, let $H$ be a saturated hereditary subset of $E^0$, and let $F = F_H$. Let $J_H$ be the ideal in $C^*(E)$ generated by $\{p_v : v \in H\} \cup \{p_v - p_{v,H} : v \in B_H \}$. Then $C^*(E) / J_H \cong C^*(F)$. Thus the inclusion $F \hookrightarrow E$ induces a surjection $C^*(E) \twoheadrightarrow C^*(F)$. This is a special case of \cite[Theorem 3.6]{bhrs}, where all gauge-invariant ideals in $C^*(E)$ are described by means of the hereditary saturated subsets of $E^0$ and the subsets of the breaking vertices for these sets. We will generalize this below (Corollary \ref{cor kernel of quotient map}) using relative Toeplitz graph algebras. In this general setting, the ideals involved require just a hereditary subset of $E^0$. It is only when discussing ideals in Cuntz-Krieger algebras that the notion of saturation is relevant.

\section{The groupoid picture} \label{section groupoid picture}

We will describe the ideals that are the kernels of the quotient maps between relative Toeplitz algebras of a fixed graph. For this it is convenient to use the description of these algebras using groupoids. Recall that a groupoid is called \textit{ample} if it is \'etale and its unit space is totally disconnected.

For a directed graph $E$ we let $G(E)$ be the groupoid of $E$. We will use the description from \cite{spi2}. The unit space is $G(E)^{(0)} = E^\infty \cup E^*$, the set of all infinite and finite directed paths in $E$. For $\alpha \in E^*$ we let $Z(\alpha) = \alpha E^\infty \cup \alpha E^*$ be the set of all infinite and finite paths that extend $\alpha$. Then a base of compact-open sets for the topology of $G(E)^{(0)}$ is given by all sets of the form $Z(\alpha) \setminus \bigcup_{i=1}^n Z(\beta_i)$, for $\alpha$, $\beta_i \in E^*$. This topology is totally disconnected locally compact Hausdorff. The \textit{boundary} of $E$ is the subset $\partial E = E^\infty \cup E^* \sing{E}$, a closed subset. The complement $G(E)^{(0)} \setminus \partial E = E^* \reg{E}$ is a countable discrete open subset. We let $E^* * E^* * G(E)^{(0)} = \{(\alpha,\beta,x) \in E^* \times E^* \times G(E)^{(0)} : s(\alpha) = s(\beta) = r(x) \}$. 

The groupoid of $E$ is then $G(E) = E^* * E^* * G(E)^{(0)} \bigr/ \sim$, where $(\alpha,\beta,x) \sim (\alpha',\beta',x')$ if there are $\gamma, \gamma' \in E^*$ and $y \in G(E)^{(0)}$ such that $x = \gamma y$, $x' = \gamma' y$, $\alpha \gamma = \alpha' \gamma'$, and $\beta \gamma = \beta' \gamma'$. Inversion is given by $[\alpha,\beta,x]^{-1} = [\beta,\alpha,x]$. Composition is given as follows. It can (easily) be shown that if $\beta x = \alpha' x'$ then there are $y$, $\gamma$ and $\gamma'$ such that $x = \gamma y$, $x' = \gamma' y$, and $\beta \gamma = \alpha' \gamma'$. Then $[\alpha,\beta,x] \cdot [\alpha',\beta',x'] = [\alpha\gamma,\beta'\gamma',y]$ (see \cite{spi2}.) The range and source maps are given by $r([\alpha,\beta,x]) = \alpha x$ and and $s([\alpha,\beta,x]) = \beta x$ (or by $[r(\alpha),r(\alpha),\alpha x]$  and $[r(\beta),r(\beta),\beta x]$, if we identify $x \in G(E)^{(0)}$ with $[r(x),r(x),x] \in G(E)$). 

For $(\alpha,\beta) \in E^* * E^*$, and $F \subseteq Z(s(\alpha))$, we write $[\alpha,\beta,F] = \{[\alpha,\beta,x] : x \in F \}$. For $F$ compact-open in $G(E)^{(0)}$, these sets are compact-open bisections that form a base for a topology on $G(E)$ making it an ample Hausdorff groupoid. (The connection with the other formulation of the groupoid is given by using the triple $(\alpha x, |\alpha| - |\beta|, \beta x)$ in place of our triple $[\alpha,\beta,x]$.)

It is proved in \cite{spi2} that $C^*(G(E)) = \CT C^*(E)$, where $p_v = \chi_{Z(v)}$ (or more precisely, $\chi_{[v,v,Z(v)]}$) and $s_e = \chi_{[e,s(e),Z(s(e))]}$. Moreover, $\partial E$ is a closed invariant subset of $G(E)^{(0)}$, and $C^*(G(E)|_{\partial E}) = C^*(E)$, the usual graph algebra. There is a short exact sequence
\[
0 \to C^*(G(E)|_{E^*\reg{E}}) \to \CT C^*(E) \to C^*(E) \to 0
\]
(\cite[Remark 4.10]{ren}). For each $v \in \reg{E}$, the set $E^* v$ is open, discrete and invariant, and $G(E)|_{E^* v}$ is a principal transitive groupoid. Thus the ideal $C^*(G(E)|_{E^* \reg{E}})$ is isomorphic to $\bigoplus_{v \in \reg{E}} \CK(\ell^2(E^*v))$. The summand corresponding to $v \in \reg{E}$ is generated as an ideal by $\chi_{[v,v,\{v\}]} = p_v - \sum_{e \in v E^1} s_e s_e^*$, the \textit{gap projection} at $v$. (More explicitly, a typical element of $G|_{E^*v}$ is $[\alpha,\beta,\gamma v] = [\alpha\gamma,\beta\gamma,v]$, so $G|_{E^* v} = \{ [\xi,\eta,v] : \xi,\eta \in E^*v \} \equiv E^* v \times E^*v$.)

Let $A \subseteq \reg{E}$. Define $G(E,A)^{(0)} = E^\infty \sqcup (E^* \setminus E^*A) = G(E)^{(0)} \setminus E^* A$. Then $G(E,A)^{(0)}$ is a closed invariant subset of $G(E)^{(0)}$. Now define $G(E,A) = G(E)|_{G(E,A)^{(0)}}$. Then there is an exact sequence
\[
0 \to C^*(G(E)|_{E^* A}) \to C^*(G(E)) \xrightarrow{\pi} C^*(G(E,A)) \to 0.
\]

\begin{Theorem} \label{thm relative toeplitz algebra as groupoid algebra}

$C^*(G(E,A)) \cong \CT C^*(E,A)$ (as in Definition \ref{def relative toeplitz algebra}.)

\end{Theorem}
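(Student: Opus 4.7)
The plan is to combine the exact sequence displayed immediately before the theorem with the universal properties of $\CT C^*(E,A)$ and $\CT C^*(E)$. Writing $\pi \colon C^*(G(E)) \to C^*(G(E,A))$ for the quotient map in that sequence and $q_v := p_v - \sum_{e \in vE^1} s_e s_e^*$ for the gap projection at $v \in \reg E$, the key observation is that the kernel ideal $C^*(G(E)|_{E^*A})$ coincides with the ideal $I$ of $\CT C^*(E) = C^*(G(E))$ generated by $\{q_v : v \in A\}$. Once this identification is established, the theorem follows by recognising that both $\CT C^*(E,A)$ and $C^*(G(E,A))$ solve the same universal problem: that of being generated by a Cuntz-Krieger $E$-family subject to (TCK4) at every $v \in A$.

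To identify $C^*(G(E)|_{E^*A})$ with $I$, I restrict the isomorphism $C^*(G(E)|_{E^*\reg E}) \cong \bigoplus_{v \in \reg E} \CK(\ell^2(E^*v))$, recorded in the previous section, to the open invariant subset $E^*A \subseteq E^* \reg E$, obtaining $C^*(G(E)|_{E^*A}) \cong \bigoplus_{v \in A} \CK(\ell^2(E^*v))$. The remark in the same paragraph that each summand at $v$ is generated as an ideal of $C^*(G(E))$ by $q_v$ then immediately yields $C^*(G(E)|_{E^*A}) = I$.

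For the universal characterisation, on one hand the images $P_v := \pi(p_v)$ and $S_e := \pi(s_e)$ form a Cuntz-Krieger $E$-family in $C^*(G(E,A))$ that additionally satisfies (TCK4) at every $v \in A$, since $q_v \in \ker \pi$; the universal property of $\CT C^*(E,A)$ therefore produces a surjective $*$-homomorphism $\phi \colon \CT C^*(E,A) \to C^*(G(E,A))$. On the other hand the universal Cuntz-Krieger $(E,A)$-family in $\CT C^*(E,A)$ is in particular a Cuntz-Krieger $E$-family, so yields a surjection $\psi \colon \CT C^*(E) \to \CT C^*(E,A)$ that annihilates $q_v$ for each $v \in A$, and hence the ideal $I$ they generate. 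By the previous paragraph $\psi$ factors through $C^*(G(E))/C^*(G(E)|_{E^*A}) = C^*(G(E,A))$ to produce $\bar\psi \colon C^*(G(E,A)) \to \CT C^*(E,A)$. Since $\phi$ and $\bar\psi$ are manifestly mutually inverse on the generating families, they are isomorphisms.

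The only step that carries any real content is the identification of $C^*(G(E)|_{E^*A})$ with the ideal generated by the gap projections at vertices of $A$; everything else is bookkeeping with universal properties. That identification has already been done in essence in the discussion preceding the theorem, so no gauge-invariant uniqueness argument is required and the proof is short.
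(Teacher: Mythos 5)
Your proposal is correct and follows essentially the same route as the paper: both arguments hinge on the fact that $C^*(G(E)|_{E^*A})$ is the ideal of $C^*(G(E)) = \CT C^*(E)$ generated by the gap projections $p_v - \sum_{e \in vE^1} s_e s_e^*$ for $v \in A$ (which the paper extracts from the decomposition $C^*(G(E)|_{E^*\reg E}) \cong \bigoplus_{v\in\reg E}\CK(\ell^2(E^*v))$ exactly as you do), and then conclude by matching universal properties. The only cosmetic difference is that the paper phrases the conclusion as a bijection between representations of the two algebras in an arbitrary $B$, whereas you instantiate the two directions to produce explicit mutually inverse homomorphisms.
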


\begin{proof}
Suppose that $\{P_v : v \in E^0 \} \cup \{S_e : e \in E^1 \}$ is a family in a $C^*$-algebra $B$ satisfying the relations for $\CT C^*(E,A)$. Then it also satisfies the relations for $\CT C^*(E)$, hence there is a $*$-homomorphism $\sigma : \CT C^*(E) = C^*(G(E)) \to B$ with $\sigma(p_v) = P_v$ and $\sigma(s_e) = S_e$. Then for $v \in A$, $\sigma(\chi_{[v,v,\{v\}]}) = P_e - \sum_{e \in v E^1} S_e S_e^* = 0$. Therefore $C^*(G(E)|_{E^* A}) \subseteq \ker \sigma$, and hence $\sigma$ factors through $C^*(G(E,A))$.

Conversely, suppose that $\sigma : C^*(G(E,A)) \to B$ is a $*$-homomorphism. Then $\sigma \circ \pi : C^*(G(E)) = \CT C^*(E) \to B$ is a $*$-homomorphism. Thus the elements $\sigma(\pi(p_v))$ and $\sigma(\pi(s_e))$ satisfy the relations $(CK1)-(CK3)$. Since $C^*(G(E)|_{E^* A}) \subseteq \ker \sigma \circ \pi$, these elements satisfy (CK4) at vertices $v \in A$. Thus $\sigma$ is a representation of $\CT C^*(E,A)$.
\end{proof}

For the rest of this section we use the following

\begin{Notation}

Let $E$ be a graph, $H \subseteq E^0$ a hereditary subset, and let $F = F_H$. Let $A \subseteq \reg{E}$ and $B \subseteq \reg{F}$ be subsets.

\end{Notation}

\begin{Lemma} \label{lem F closed in E}

$G(F)^{(0)}$ is a closed invariant subset of $G(E)^{(0)}$.

\end{Lemma}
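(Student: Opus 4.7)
The plan is to verify closedness and invariance separately, using the explicit description of $G(E)^{(0)} = E^\infty \cup E^*$ and its base of cylinder sets $Z(\alpha)$ from Section \ref{section groupoid picture}. The key preliminary fact, already noted in Definition \ref{def hereditary}, is that $F^0$ is \emph{cohereditary}: any edge $e \in E^1$ with $s(e) \in F^0$ automatically satisfies $r(e) \in F^0$, because $r(e) \in H$ would force $s(e) \in H$ by heredity. Iterating along a finite path, any $\alpha \in E^*$ with $s(\alpha) \in F^0$ lies entirely in $F^*$. Phrased at the level of the unit space, a path in $E$ lies in $G(F)^{(0)}$ if and only if it visits no vertex of $H$.

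For invariance, I would take an arbitrary $[\alpha,\beta,x] \in G(E)$ whose source $\beta x$ lies in $G(F)^{(0)}$ and show that the range $\alpha x$ also lies in $G(F)^{(0)}$. Since $\beta x \in F^\infty \cup F^*$, every vertex along $\beta x$ lies in $F^0$; in particular $s(\alpha) = s(\beta) = r(x) \in F^0$. The cohereditary observation then forces $\alpha \in F^*$, while $x$ itself is a tail of $\beta x$ and is therefore already a path in $F$. Hence $\alpha x \in F^\infty \cup F^* = G(F)^{(0)}$.

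For closedness, I would show the complement is open by producing, for each $x \in G(E)^{(0)} \setminus G(F)^{(0)}$, a basic cylinder neighborhood of $x$ contained in the complement. Such an $x$ visits some vertex of $H$; let $\alpha$ be the minimal initial segment of $x$ (read from the range) whose source lies in $H$, so that $\alpha$ is the trivial path $r(x)$ when $r(x) \in H$ and otherwise a nontrivial truncation of $x$. Then $x \in Z(\alpha)$, and every path in $Z(\alpha)$ extends $\alpha$ and hence visits $s(\alpha) \in H$, so $Z(\alpha) \subseteq G(E)^{(0)} \setminus G(F)^{(0)}$. There is no serious obstacle here; the only real care needed is keeping the Australian convention straight and treating finite and infinite paths uniformly, so that the word ``visits'' makes sense in both cases.
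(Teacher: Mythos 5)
Your proof is correct and follows essentially the same route as the paper: the paper decomposes $G(E)^{(0)} = E^*HE^\infty \sqcup E^*H \sqcup G(F)^{(0)}$ and exhibits, for each point of the complement, a cylinder $Z(\alpha)$ with $s(\alpha)\in H$ contained in the complement, which is exactly your ``visits a vertex of $H$'' argument. Your explicit verification of invariance via cohereditariness fills in what the paper dismisses as clear, but is the same underlying observation.
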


\begin{proof}
We note that
\begin{align}
E^\infty &= E^* H E^\infty \sqcup E^* F^\infty = E^* H E^\infty \sqcup F^\infty \notag \\
E^* &= E^* H \sqcup E^* F^0 = E^* H \sqcup F^* \label{eqn decompositions} \\
G(E)^{(0)} &= E^* H E^\infty \sqcup E^* H \sqcup G(F)^{(0)}. \notag
\end{align}
We will show that $G(E)^{(0)} \setminus G(F)^{(0)}$ is open. First let $x \in E^* H E^\infty$. Then $x = \alpha y$ for some $\alpha \in E^*$ and $y \in E^\infty$ such that $s(\alpha) = r(y) \in H$. Then $x \in Z(\alpha) \subseteq E^* H \cup E^* H E^\infty = (G(F)^{(0)})^c$, hence $Z(\alpha)$ is a neighborhood of $x$ disjoint from $G(F)^{(0)}$. Next let $\alpha \in E^* H$. Then again $Z(\alpha) \subseteq (G(F)^{(0)})^c$. Invariance is clear.
\end{proof}

Note that $G(F,B)^{(0)}$ is a closed subset of $G(F)^{(0)}$, and hence is a closed invariant subset of $G(E)^{(0)}$. 

\begin{Theorem}

$G(F,B)^{(0)} \subseteq G(E,A)^{(0)}$ if and only if $A \cap F^0 \subseteq B$.

\end{Theorem}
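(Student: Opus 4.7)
The plan is to unpack both descriptions and reduce the containment to a statement purely about vertices. Since $G(E,A)^{(0)} = G(E)^{(0)} \setminus E^*A$ and $G(F,B)^{(0)} \subseteq G(F)^{(0)} \subseteq G(E)^{(0)}$ by Lemma \ref{lem F closed in E}, the inclusion $G(F,B)^{(0)} \subseteq G(E,A)^{(0)}$ is equivalent to the disjointness $G(F,B)^{(0)} \cap E^*A = \varnothing$.

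Next I would split $G(F,B)^{(0)} = F^\infty \sqcup (F^* \setminus F^*B)$. The infinite-path piece $F^\infty \subseteq E^\infty$ is automatically disjoint from $E^*A \subseteq E^*$, so only the finite-path piece matters, and the condition reduces to $(F^* \setminus F^*B) \cap E^*A = \varnothing$, equivalently $F^* \cap E^*A \subseteq F^*B$.

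The key identification is then $F^* \cap E^*A = F^*(A \cap F^0)$: a path in $F^*$ has source in $F^0$, and a path in $E^*A$ has source in $A$, so the common source set is precisely $A \cap F^0$. Thus the theorem becomes $F^*(A \cap F^0) \subseteq F^*B$ if and only if $A \cap F^0 \subseteq B$.

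Sufficiency is immediate: if $A \cap F^0 \subseteq B$ and $\alpha \in F^*(A \cap F^0)$, then $s(\alpha) \in A \cap F^0 \subseteq B$ places $\alpha$ in $F^*B$. For necessity I would test on length-$0$ paths: for each $v \in A \cap F^0$, the trivial path $v$ itself lies in $F^*(A \cap F^0)$, so must lie in $F^*B$, which forces $v \in B$. The proof is essentially bookkeeping; the only point requiring care is keeping straight the Australian convention that $E^*A$ consists of paths with \emph{source} in $A$, so that the identification $F^* \cap E^*A = F^*(A \cap F^0)$ comes out in the right direction.
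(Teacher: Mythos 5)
Your proof is correct and follows essentially the same route as the paper's: both split the unit spaces into their infinite-path and finite-path parts, observe that $F^\infty \subseteq E^\infty$ handles the former automatically, and reduce the latter to a containment of source-vertex sets, with necessity obtained from length-zero paths. The only (harmless) difference is that you phrase the finite-path comparison as disjointness from $E^*A$ and so never need the hereditariness of $E^0 \setminus F^0$, whereas the paper's rewriting $F^*(F^0\setminus B) = E^*(F^0\setminus B)$ implicitly uses it.
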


\begin{proof}
\begin{align*}
G(F,B)^{(0)} &= F^\infty \cup F^*(F^0 \setminus B)
= F^\infty \sqcup E^*(F^0 \setminus B) \\
G(E,A)^{(0)} &= E^\infty \sqcup E^*(E^0 \setminus A).
\end{align*}
Since $F^\infty \subseteq E^\infty$, $G(F,B)^{(0)} \subseteq G(E,A)^{(0)}$ if and only if $F^0 \setminus B \subseteq E^0 \setminus A$, or equivalently, if and only if $A \cap F^0 \subseteq B$.
\end{proof}

Assume that $A \cap F^0 \subseteq B$. Let $U = G(E,A)^{(0)} \setminus G(F,B)^{(0)}$, an open $G(E,A)$-invariant subset. We have an exact sequence
\[
0 \to C^*(G(E,A)|_U) \to \CT C^*(E,A) \to \CT C^*(F,B) \to 0.
\]
We will describe the ideal in this sequence by means of a generating family of projections, as is done traditionally (cf. \cite{bhrs}).

\begin{Theorem} \label{thm larger ideal}

Assume that $A \cap F^0 \subseteq B$. Let $U = G(E,A)^{(0)} \setminus G(F,B)^{(0)}$. Then $C^*(G(E,A)|_U)$ is the ideal in $\CT C^*(E,A)$ generated by $\{p_v : v \in H \} \cup \{ p_v - p_{v,H} : v \in B \setminus A \}$.

\end{Theorem}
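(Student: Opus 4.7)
The plan is to prove the two containments $I \subseteq C^*(G(E,A)|_U)$ and $I \supseteq C^*(G(E,A)|_U)$ separately, where $I$ denotes the ideal generated by $\{p_v : v \in H\} \cup \{p_v - p_{v,H} : v \in B \setminus A\}$.

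For the forward inclusion, I would use the exact sequence $0 \to C^*(G(E,A)|_U) \to \CT C^*(E,A) \xrightarrow{q} \CT C^*(F,B) \to 0$ established just before the theorem, so it suffices to show that each generator of $I$ is killed by $q$. The map $q$ is restriction to $G(F,B)^{(0)}$, so $q(p_v)$ is the characteristic function of $Z(v) \cap G(F,B)^{(0)}$. For $v \in H$ the hereditary property forces every path in $Z(v)$ to stay in $H$, so $Z(v) \subseteq E^*HE^\infty \sqcup E^*H$, which is disjoint from $G(F,B)^{(0)} = F^\infty \sqcup E^*(F^0 \setminus B)$; hence $q(p_v) = 0$. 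For $v \in B \setminus A$, $q(p_v - p_{v,H})$ equals $p_v^F - \sum_{e \in vF^1} s_e^F (s_e^F)^*$, which vanishes by relation (TCK4) applied at $v \in B$ in $\CT C^*(F,B)$.

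For the reverse inclusion, I would construct a $*$-homomorphism $\psi : \CT C^*(F,B) \to \CT C^*(E,A)/I$ that is a two-sided inverse to the surjection $\bar q : \CT C^*(E,A)/I \to \CT C^*(F,B)$ coming from $I \subseteq \ker q$. This forces $\bar q$ to be an isomorphism and therefore $I = \ker q = C^*(G(E,A)|_U)$. The candidate assignment is $\psi(p_v^F) = [p_v]$ and $\psi(s_e^F) = [s_e]$ for $v \in F^0$ and $e \in F^1$, and the universal property of $\CT C^*(F,B)$ reduces everything to verifying that these images form an $(F,B)$-family in $\CT C^*(E,A)/I$. Relations (CK1)--(CK3) are inherited from $\CT C^*(E,A)$. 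The only delicate point is (TCK4) at $v \in B$, and this is where the standing hypothesis $A \cap F^0 \subseteq B$ enters: combined with $B \subseteq F^0$ it forces $A \cap F^0 = A \cap B$, so $B = (A \cap F^0) \sqcup (B \setminus A)$. For $v \in B \setminus A$ the required relation $[p_v] = \sum_{e \in vF^1} [s_e s_e^*]$ is exactly what was added to $I$. For $v \in A \cap F^0$, (TCK4) already holds in $\CT C^*(E,A)$ and expands as $p_v = \sum_{e \in vF^1} s_e s_e^* + \sum_{e \in vE^1 H} s_e s_e^*$; rewriting $s_e s_e^* = s_e p_{s(e)} s_e^*$ and using $[p_{s(e)}] = 0$ for $s(e) \in H$ kills the second sum in the quotient, yielding the desired $(F,B)$-form of (TCK4). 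Once $\psi$ is defined, checking that $\bar q \circ \psi$ and $\psi \circ \bar q$ agree with the identity on generators is immediate.

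I expect the main obstacle to be the verification of (TCK4) at vertices in $B \cap A$: this requires both the combinatorial identity $A \cap F^0 = A \cap B$ to know exactly which vertices must be addressed, and the small algebraic trick of re-expressing $s_e s_e^* = s_e p_{s(e)} s_e^*$ so that the relation $[p_{s(e)}] = 0$ can be invoked to eliminate the contributions of edges with source in $H$.
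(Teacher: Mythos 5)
Your proposal is correct, but it follows a genuinely different route from the paper's. The paper never leaves the groupoid picture: it decomposes $U$ as $E^* H E^\infty \sqcup E^*(H \setminus A) \sqcup F^*(B \setminus A)$, identifies the supports of $p_v$ (for $v \in H$) and of $p_v - p_{v,H}$ (for $v \in B \setminus A$) as compact-open subsets of $G(E,A)^{(0)}$, and checks that the invariant open sets $W_1, W_2$ they generate satisfy $W_1 \cup W_2 = U$, so that the ideal generated by the projections coincides with the ideal generated by $C_0(U)$. You instead argue by generators and relations: the containment $I \subseteq \ker q = C^*(G(E,A)|_U)$ by evaluating $q$ on the generators, and the reverse containment by producing an inverse $\psi$ to $\bar q$ via the universal property of $\CT C^*(F,B)$. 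Your handling of the one nontrivial relation, (TCK4) at $v \in B$, is right: the decomposition $B = (A \cap F^0) \sqcup (B \setminus A)$ is valid under the standing hypothesis, the case $v \in B \setminus A$ is exactly a generator of $I$, and for $v \in A \cap F^0$ the finiteness of $vE^1$ (since $A \subseteq \reg{E}$) lets you split off the edges with source in $H$ and kill them using $s_e s_e^* = s_e p_{s(e)} s_e^*$ with $p_{s(e)} \in I$. Both proofs ultimately rest on the same groupoid input, namely the exact sequence displayed just before the theorem, which identifies $\ker q$ with $C^*(G(E,A)|_U)$. What the paper's computation buys that yours does not is the explicit three-piece decomposition of $U$, which is reused verbatim later (Notation \ref{main proofs notn} and Propositions \ref{prop pullback}--\ref{prop pullback 3}); what yours buys is a more algebraic argument that avoids the bookkeeping of invariant hulls of compact-open sets and makes the role of the hypothesis $A \cap F^0 \subseteq B$ completely transparent.
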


\begin{proof}
We begin by dissecting $U$. Using \eqref{eqn decompositions} we have
\begin{align*}
G(E,A)^{(0)}
&= E^\infty \sqcup E^*(E^0 \setminus A) \\
&= E^* H E^\infty \sqcup F^\infty \sqcup E^* (H \setminus A) \sqcup F^*(F^0 \setminus A) \\
&= E^* H E^\infty \sqcup E^* (H \setminus A) \sqcup F^\infty \sqcup F^*(F^0 \setminus B) \sqcup F^*(B \setminus A) \\
&= E^* H E^\infty \sqcup E^* (H \setminus A) \sqcup F^*(B \setminus A) \sqcup G(F,B)^{(0)}, \\
\noalign{hence}
U
&= E^* H E^\infty \sqcup E^* (H \setminus A) \sqcup F^*(B \setminus A).
\end{align*}
We will write this as $U = \bigsqcup_{j=1}^3 U_j$.

Now we consider the ideal generated as in the statement of the theorem. For $v \in H$ we have that $p_v = \chi_{Z(v) \cap G(E,A)^{(0)}}$. We observe that
\[
Z(v) \cap G(E,A)^{(0)}
= v E^\infty \sqcup v E^*(E^0 \setminus A) 
= v E^\infty \sqcup v E^*(H \setminus A).
\]
The invariant subset of $G(E,A)^{(0)}$ generated by $\{ Z(v) \cap G(E,A)^{(0)} : v \in H \}$ is then
\[
W_1 :=
 \bigcup_{v \in H} E^* (Z(v) \cap G(E,A)^{(0)})
= \bigcup_{v \in H} (E^* v E^\infty \cup E^* v E^* (H \setminus A))
= E^* H E^\infty \cup E^*(H \setminus A).
\]
For $v \in B \setminus A$, $p_v - p_{v,H} = \chi_{(Z(v) \setminus \cup_{e \in v F^1} Z(e)) \cap G(E,A)^{(0)}}$. We have
\begin{align*}
\bigl(Z(v) \setminus \cup_{e \in v F^1} Z(e)\bigr) \cap G(E,A)^{(0)}
&= \bigl(\{v\} \cup \bigcup_{e \in v E^1 H} Z(e)\bigr) \cap G(E,A)^{(0)} \\
&= \{v\} \cup v E^1 H E^\infty \cup v E^1 H E^* (H \setminus A) \\
&\subseteq \{v\} \cup E^* H E^\infty \cup E^* (H \setminus A).
\end{align*}
The invariant subset of $G(E,A)^{(0)}$ generated by $\{ (Z(v) \setminus \cup_{e \in v F^1} Z(e)) \cap G(E,A)^{(0)} : v \in B \setminus A \}$ is then
\begin{align*}
W_2 &:=
\bigcup_{v \in B \setminus A} (E^*v \cup E^* v E^1 H E^\infty \cup E^* v E^1 H E^* (H \setminus A)) \\
&= E^*(B \setminus A) \cup W_3,
\end{align*}
where $W_3 \subseteq W_1$. Therefore the ideal in $\CT C^*(E,A)$ generated by $\{ p_v : v \in H \} \cup \{p_v - p_{v,H} : v \in B \setminus A \}$ is the ideal generated by $C_0(W_1 \cup W_2)$, and we have that $W_1 \cup W_2 = E^* H E^\infty \cup E^*(H \setminus A) \cup F^*(B\setminus A) = U$.
\end{proof}

\begin{Definition} \label{def J(E,A;F,B)}

We denote by $J(E,A;F,B)$ the ideal $C^*(G(E,A)|_U)$ in Theorem \ref{thm larger ideal}.

\end{Definition}

\begin{Corollary} \label{cor kernel of quotient map}

There is a (necessarily surjective) homomorphism $\CT C^*(E,A) \to \CT C^*(F,B)$ determined by $p_v \mapsto 0$ for $v \in H$, $s_e \mapsto s_e$ for $e \in F^1$ and $s_e \mapsto 0$ if $e \not\in F^1$ if and only if $A \cap F^0 \subseteq B$. In this case, the kernel of the homomorphism is $J(E,A;F,B)$. If $A = \reg{E}$ and $A \cap F^0 \subseteq B$, then $\reg{E} \cap F^0 \subseteq \reg{F}$. Then $\reg{E} \cap \source{F} = \varnothing$, and it follows that $H$ is saturated (see Remark \ref{remark saturated}). In this case, $\CT C^*(E,A) = C^*(E)$, $\CT C^*(F,B) = C^*(F)$, and the kernel is $J(E,\reg{E};F,\reg{F}) = J_{H,B_H}$ (as in \cite{bhrs}.)

\end{Corollary}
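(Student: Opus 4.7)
My plan is to handle the ``if'' direction via the groupoid picture. Under the hypothesis $A \cap F^0 \subseteq B$, the discussion preceding Theorem \ref{thm larger ideal} gives an exact sequence
\[
0 \to J(E,A;F,B) \to \CT C^*(E,A) \to \CT C^*(F,B) \to 0,
\]
with the quotient map arising from restriction of characteristic functions from $G(E,A)^{(0)}$ to the closed invariant subset $G(F,B)^{(0)}$. I would then verify that this quotient map has the claimed behavior on generators. For $v \in H$, hereditarity forces $Z(v) \subseteq HE^* \sqcup HE^\infty$, disjoint from $G(F,B)^{(0)} \subseteq F^* \sqcup F^\infty$, so $p_v \mapsto 0$. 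For $v \in F^0$, the intersection $Z(v) \cap G(F,B)^{(0)}$ is precisely the basic cylinder of $v$ inside $G(F,B)^{(0)}$, so $p_v \mapsto p_v$. Since $F^1 = E^1 F^0$, any edge $e \notin F^1$ has $s(e) \in H$, so the bisection $[e, s(e), Z(s(e))]$ vanishes against $G(F,B)^{(0)}$ and $s_e \mapsto 0$; the case $e \in F^1$ is immediate. This establishes the existence of the homomorphism and simultaneously identifies its kernel as $J(E,A;F,B)$.

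For the ``only if'' direction, suppose such a $\ast$-homomorphism $\phi$ exists and fix $v \in A \cap F^0$. Since $v \in A \subseteq \reg{E}$, the Cuntz-Krieger relation $p_v = \sum_{e \in vE^1} s_e s_e^*$ is a finite sum that holds in $\CT C^*(E,A)$. Applying $\phi$ and noting $vE^1 \cap F^1 = vF^1$ (using $F^1 = E^1 F^0$), I obtain $p_v = \sum_{e \in vF^1} s_e s_e^*$ in $\CT C^*(F,B)$. In the groupoid picture, the gap projection $p_v - \sum_{e \in vF^1} s_e s_e^*$ equals $\chi_{\{v\} \cap G(F,B)^{(0)}}$, which is nonzero exactly when $v \in F^0 \setminus B$. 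Thus $v \notin B$ produces a contradiction, forcing $v \in B$ and hence $A \cap F^0 \subseteq B$.

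Finally, to obtain the closing assertion, I specialize to $A = \reg{E}$ and $B = \reg{F}$, so that $\CT C^*(E,A) = C^*(E)$ and $\CT C^*(F,B) = C^*(F)$. The condition $A \cap F^0 \subseteq B$ then reads $\reg{E} \cap F^0 \subseteq \reg{F}$, equivalently $\reg{E} \cap \source{F} = \varnothing$, which by Remark \ref{remark saturated} is the saturation of $H$. The identification $J(E,\reg{E};F,\reg{F}) = J_{H,B_H}$ follows immediately from Theorem \ref{thm larger ideal} once one observes $B \setminus A = \reg{F} \setminus \reg{E} = \reg{F} \cap \sing{E} = B_H$. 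The only genuinely delicate point in the argument is the converse, where the correct test relation (the Cuntz-Krieger condition at $v \in A \cap F^0$) must be paired with the nonvanishing of the groupoid gap projection supplied by the description in Section \ref{section groupoid picture}; everything else is bookkeeping against Theorem \ref{thm larger ideal} and the definitions.
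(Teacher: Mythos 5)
Your proof is correct and takes essentially the same route the paper intends: the paper states this Corollary without proof, as an assembly of the preceding exact sequence, the unnamed Theorem characterizing $G(F,B)^{(0)} \subseteq G(E,A)^{(0)}$, and Theorem \ref{thm larger ideal}, which is exactly what you carry out. Your gap-projection argument for the ``only if'' direction is the one detail you supply beyond what is literally on the page, and it is sound (it amounts to observing that $\chi_{\{v\}}$ survives in $C^*(G(F,B))$ precisely when $v \in F^0 \setminus B$, which is the same content as the unit-space containment the paper proves).
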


\section{Gauge-invariant ideals} \label{section gauge-invariant ideals}

The ideals in $\CT C^*(E)$ (and in $\CT C^*(E,A)$) that we considered in section \ref{section groupoid picture}  are the \textit{gauge-invariant} ideals. The gauge-invariant ideals in $C^*(E)$ were completely described in \cite{bhrs}. This was extended to the case of higher rank graphs, and to the Toeplitz algebras $\CT C^*(\Lambda)$, in \cite{sww}. In the groupoid picture these are the ideals that arise from open invariant subsets of $G(E)^{(0)}$ (see Theorem \ref{thm gauge-invariant ideals}), and such sets can be described in terms of the underlying graph. However the description in the generality of higher rank graphs is extremely complicated. A simple proof was given for the case of directed graphs in \cite{ek}, using the results in \cite{bhrs} for Cuntz-Krieger algebras. In this section we give an alternative proof that bypasses the theorem of \cite{bhrs}. Much of the argument can be given in the more general context of a $C^*$-algebra with an action of a compact abelian group, or of the $C^*$-algebra of an \'etale groupoid. We hope that our arguments may prove useful in more general situations than that of higher-rank graphs. We begin with some well-known facts about fixed point algebras of compact abelian group actions. Let $A$ be a $C^*$-algebra, let $K$ be a compact abelian group, and let $\gamma : K \to \text{Aut}(A)$ be a (point-norm) continuous action. Define $P : A \to A$ by $P(x) = \int_K \gamma_t(x) dt$. Then $P$ is a faithful conditional expectation onto the fixed point algebra $A^\gamma := \{ x \in A : \gamma_t(x) = x,\ t \in K \}$. We will use the following notation: if $A$ is a $C^*$-algebra and $S \subseteq A$ we write $\langle S \rangle_A$ for the ideal in $A$ generated by $S$.

\begin{Lemma} \label{lem fixed point algebra}

If $I$ is a $\gamma$-invariant ideal in $A$ then $P(I) = I \cap A^\gamma$, and $I = \langle I \cap A^\gamma \rangle_A$.

\end{Lemma}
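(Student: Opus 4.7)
The plan is to establish the two statements in order, leaning throughout on the standard properties of the Haar-integral conditional expectation $P$ on a compact abelian group action.

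For the equality $P(I) = I \cap A^\gamma$, I would first check both containments directly. Given $x \in I$, the element $\gamma_t(x)$ lies in $I$ for every $t \in K$ by $\gamma$-invariance, and since $I$ is norm-closed and the map $t \mapsto \gamma_t(x)$ is continuous, the Bochner integral $P(x) = \int_K \gamma_t(x)\,dt$ lies in $I$. Translation invariance of Haar measure gives $\gamma_s P(x) = \int_K \gamma_{s+t}(x)\,dt = P(x)$, so $P(x) \in A^\gamma$, proving $P(I) \subseteq I \cap A^\gamma$. Conversely, any $y \in I \cap A^\gamma$ satisfies $\gamma_t(y) = y$ for all $t$, so $P(y) = y$, showing $y \in P(I)$.

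The nontrivial statement is $I = \langle I \cap A^\gamma \rangle_A$. Write $J = \langle I \cap A^\gamma \rangle_A$. The inclusion $J \subseteq I$ is immediate, so I need to show $I \subseteq J$. The key point is that $J$ is itself $\gamma$-invariant: its generating set $I \cap A^\gamma$ consists of fixed points, and each $\gamma_t$ is an automorphism of $A$, so it sends the ideal $J$ into itself. Hence $\gamma$ descends to an action $\bar\gamma$ on the quotient $A/J$, and the associated Haar integral gives a conditional expectation $\bar P : A/J \to (A/J)^{\bar\gamma}$ which is faithful (this is the standard fact about compact group actions on $C^*$-algebras that I want to cite).

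Now for any $x \in I$, the element $x^*x$ lies in $I$, so by the first part $P(x^*x) \in I \cap A^\gamma \subseteq J$. Let $\pi : A \to A/J$ be the quotient. Then $\bar P(\pi(x)^*\pi(x)) = \bar P(\pi(x^*x)) = \pi(P(x^*x)) = 0$, and faithfulness of $\bar P$ forces $\pi(x) = 0$, i.e., $x \in J$, as required. The main obstacle is precisely this last step: it requires descending the action to $A/J$ (which is why I first verify $\gamma$-invariance of $J$) and invoking faithfulness of the induced conditional expectation to conclude $x \in J$ from the vanishing of $\bar P(\pi(x^*x))$.
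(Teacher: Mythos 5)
Your proposal is correct and follows essentially the same route as the paper's proof: both verify $P(I)=I\cap A^\gamma$ by the two easy containments, then show the reverse inclusion $I\subseteq\langle I\cap A^\gamma\rangle_A$ by noting that this ideal is $\gamma$-invariant, descending the action to the quotient, and using faithfulness of the induced conditional expectation together with $\bar P\circ\pi=\pi\circ P$. The only cosmetic difference is that the paper phrases the last step as a contradiction with a positive element of $I\setminus\langle I\cap A^\gamma\rangle_A$, whereas you apply faithfulness directly to $\pi(x)^*\pi(x)$.
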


\begin{proof}
Let $I$ be a $\gamma$-invariant ideal in $A$. Then $P(I) \subseteq I$, hence $P(I) \subseteq I \cap A^\gamma$. Conversely, since $P|_{A^\gamma}$ is the identity map, $I \cap A^\gamma = P(I \cap A^\gamma) \subseteq P(I)$. Therefore $P(I) = I \cap A^\gamma$.

In particular, $P(I)$ is an ideal in $A^\gamma$. Let $I' = \langle P(I) \rangle_A$. Since $P(I)$ is pointwise fixed by $\gamma$, hence is $\gamma$-invariant, $I'$ is also $\gamma$-invariant. Since $P(I) \subseteq I$, it follows that $I' \subseteq I$. Then $P(I') \subseteq P(I) \subseteq I' \cap A^\gamma = P(I')$, so $P(I') = P(I)$. We claim that $I' = I$. For if not, then $I'$ is a proper ideal in $I$, and there exists $x \ge 0$, $x \in I \setminus I'$. Let $\pi_0 : A \to A/I'$ be the quotient map. Then $\pi_0(x) \not= 0$. We may define $\beta : K \to \text{Aut}(A/I')$ by $\beta_t(a + I') = \gamma_t(a) + I'$, and let $P' : A/I' \to (A/I')^\beta$ the corresponding conditional expectation. Then $P'(\pi_0(x)) \not= 0$, but $P'(\pi_0(x)) = \pi_0(P(x)) = 0$, since $P(x) \in P(I) = P(I') \subseteq I'$. Therefore $I' = I$.
\end{proof}

In this paper we are concerned with invariant ideals in the $C^*$-algebras of \'etale groupoids. In particular, we will give conditions under which these ideals correspond to open groupoid-invariant subsets of the unit space. We cite the following elementary fact. A proof is given in \cite[Lemma 3.2]{bonli} for the reduced $C^*$-algebra (which will be sufficient for our use since we will consider amenable groupoids). However that proof still holds for the full $C^*$-algebra.

\begin{Lemma} \label{lem ideals and invariant open sets}

Let $G$ be a Hausdorff \'etale groupoid, let $U \subseteq G^{(0)}$ be an open $G$-invariant set, and let $I$ be the ideal in $C^*(G)$ generated by $C_0(U)$. Then $I \cap C_0(G^{(0)}) = C_0(U)$.

\end{Lemma}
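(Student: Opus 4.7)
The inclusion $C_0(U) \subseteq I \cap C_0(G^{(0)})$ is immediate, so the task is to establish the reverse inclusion. My plan is to produce a $*$-homomorphism $\pi \colon C^*(G) \to C^*(G|_F)$, where $F := G^{(0)} \setminus U$, whose restriction to $C_0(G^{(0)})$ is the restriction-of-functions map $\rho \colon f \mapsto f|_F$, and which annihilates $I$. Granting such a $\pi$, any $f \in I \cap C_0(G^{(0)})$ satisfies $f|_F = \pi(f) = 0$, whence $f \in C_0(U)$.

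To construct $\pi$: since $U$ is open and $G$-invariant, $F$ is closed and $G$-invariant, so $G|_F$ is a Hausdorff \'etale subgroupoid of $G$. The function restriction $C_c(G) \to C_c(G|_F)$, $f \mapsto f|_{G|_F}$, is a $*$-algebra homomorphism: the $G$-invariance of $F$ ensures that any factorization $\alpha\beta$ of an element of $G|_F$ has $\alpha, \beta \in G|_F$, so convolution is preserved, and the involution clearly is. Pulling back any $C^*$-seminorm on $C_c(G|_F)$ along this map produces a $C^*$-seminorm on $C_c(G)$, which is therefore bounded above by the universal norm defining $C^*(G)$. Hence restriction extends continuously to a $*$-homomorphism $\pi \colon C^*(G) \to C^*(G|_F)$, and by construction $\pi|_{C_0(G^{(0)})} = \rho$.

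To check $I \subseteq \ker \pi$: for $u \in C_0(U)$, $\pi(u) = u|_F = 0$, so $C_0(U) \subseteq \ker \pi$; since $\ker \pi$ is an ideal, $I = \langle C_0(U) \rangle_{C^*(G)} \subseteq \ker \pi$. The main obstacle is the continuous extension of the restriction map from $C_c$ to the full $C^*$-algebra. For the reduced algebras this would follow directly from the canonical conditional expectation onto $C_0(G^{(0)})$ (the route taken in \cite{bonli}); for the full algebra one must instead invoke the universal property of $C^*(G)$ as sketched above. Once that technical point is in hand, the remainder of the proof is essentially formal.
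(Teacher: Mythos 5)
Your argument is correct, but it takes a different route from the one the paper relies on. The paper does not prove the lemma directly: it cites \cite[Lemma 3.2]{bonli}, whose argument runs through the canonical conditional expectation $E \colon C^*_r(G) \to C_0(G^{(0)})$ (restriction to the unit space on $C_c(G)$): one checks that $E$ maps the ideal generated by $C_0(U)$ into $C_0(U)$ — because a product $\alpha\beta\gamma \in G^{(0)}$ with $\beta \in U$ forces $r(\alpha) \in U$ by invariance — and then $a = E(a) \in C_0(U)$ for $a \in I \cap C_0(G^{(0)})$; the authors simply remark that this persists for the full algebra. You instead build the quotient $\pi \colon C^*(G) \to C^*(G|_F)$ and read off the conclusion from $I \subseteq \ker\pi$ and the injectivity of $C_0(F) \hookrightarrow C^*(G|_F)$. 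Both are standard and both are complete; your route has the advantage of working natively in the full $C^*$-algebra (no detour through the reduced algebra and no expectation needed), at the cost of the boundedness argument for extending restriction from $C_c(G)$ to $C^*(G)$. That step is fine here but deserves one more sentence: the reason every $C^*$-seminorm pulled back along a $*$-homomorphism of $C_c(G)$ is dominated by the universal norm is the \'etale hypothesis — for $f$ supported on an open bisection, $\|\rho(f)\|^2 = \|\rho(f^*f)\|$ with $f^*f \in C_c(G^{(0)})$, so every $*$-representation is automatically bounded; this is not a formal property of arbitrary $*$-algebras. Likewise you implicitly use that $C_0(F)$ embeds faithfully in $C^*(G|_F)$ when passing from $\pi(f)=0$ to pointwise vanishing on $F$; that is standard but worth flagging. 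Finally, your parenthetical that the expectation route is confined to the reduced algebra is not quite right: composing $C^*(G) \twoheadrightarrow C^*_r(G)$ with $E$ gives a (possibly non-faithful) expectation on the full algebra, and faithfulness is not needed for this lemma, which is exactly why the paper's one-line extension of \cite{bonli} is legitimate.
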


A common instance of a compact abelian group action occurs when a groupoid has a continuous homomorphism into a discrete abelian group (such a homomorphism is often called a \textit{cocycle}). Suppose that $G$ is a Hausdorff \'etale groupoid with a continuous homomorphism $c : G \to \Gamma$, where $\Gamma$ is a discrete abelian group. There is a dual action $\gamma$ of $\widehat{\Gamma}$ on $C^*(G)$, defined on $C_c(G)$ by $\gamma_z(f) (g) = \langle z,c(g) \rangle f(g)$. Let $G^c = c^{-1}(0)$. Then $G^c$ is also a Hausdorff \'etale groupoid, with the same unit space as $G$. It is shown in \cite[Proposition 9.3]{spi2} that if $G^c$ is amenable then so is $G$, and in fact $C^*(G)^\gamma = C^*(G^c)$. There are many situations where $G^c$ is an AF groupoid, and hence amenable (\cite[Definition III.1.1]{ren1}); some examples are the $C^*$-algebras of higher rank graphs, and more generally, of certain categories of paths (\cite[Theorem 9.8]{spi2}), and also the Toeplitz Cuntz-Krieger algebras of directed graphs. We give the following result for these situations. 

\begin{Lemma} \label{lem AF fixed point groupoid}

Let $G$ be a Hausdorff ample groupoid with a continuous homomorphism $c : G \to \Gamma$, for some discrete abelian group $\Gamma$. Suppose that the fixed point subgroupoid $G^c$ is an AF groupoid. Let $\gamma$ be the dual action of $K = \widehat{\Gamma}$ on $C^*(G)$. Let $I$ be a $\gamma$-invariant ideal in $C^*(G)$, and let $I \cap C_0(G^{(0)}) = C_0(U)$, where $U$ is an open $G$-invariant subset of $G^{(0)}$. Then $I = \langle C_0(U) \rangle_{C^*(G)}$.

\end{Lemma}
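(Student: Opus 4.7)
The plan is to reduce the statement to the analogous fact for ideals in the $C^*$-algebra of an AF groupoid. The inclusion $\langle C_0(U) \rangle_{C^*(G)} \subseteq I$ is immediate since $C_0(U) = I \cap C_0(G^{(0)}) \subseteq I$. For the reverse inclusion, I would use the $\gamma$-invariance of $I$: by Lemma \ref{lem fixed point algebra},
\[ I = \langle I \cap C^*(G)^\gamma \rangle_{C^*(G)} = \langle I \cap C^*(G^c) \rangle_{C^*(G)}, \]
where the second equality uses the identification $C^*(G)^\gamma = C^*(G^c)$ from \cite[Proposition 9.3]{spi2}. Thus it suffices to show $I \cap C^*(G^c) \subseteq \langle C_0(U) \rangle_{C^*(G)}$.

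The key step is then to invoke the standard fact that every closed two-sided ideal $J$ of the $C^*$-algebra of an AF (Hausdorff ample) groupoid satisfies $J = \langle J \cap C_0((G^c)^{(0)}) \rangle_{C^*(G^c)}$. This can be seen by writing $G^c = \bigcup_n G_n$ as an increasing union of elementary (compact principal) open subgroupoids: each $C^*(G_n)$ is a $c_0$-sum of matrix algebras whose minimal central projections lie in $C_0(G_n^{(0)})$, so every ideal in $C^*(G_n)$ is generated by its intersection with this diagonal subalgebra, and this property is preserved in the inductive limit. Applied to the ideal $I \cap C^*(G^c)$ of $C^*(G^c)$, and using $C_0(G^{(0)}) \subseteq C^*(G^c)$, this yields
\[ I \cap C^*(G^c) = \langle I \cap C_0(G^{(0)}) \rangle_{C^*(G^c)} = \langle C_0(U) \rangle_{C^*(G^c)} \subseteq \langle C_0(U) \rangle_{C^*(G)}, \]
and combining with the previous display gives $I = \langle C_0(U) \rangle_{C^*(G)}$.

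The main obstacle is justifying the AF-groupoid ideal structure invoked in the middle step; the rest of the argument is a formal manipulation of generators of ideals via Lemma \ref{lem fixed point algebra} and the identification of the fixed-point algebra. One could alternatively cite Renault's bijection between ideals of $C^*(G^c)$ and open $G^c$-invariant subsets of $G^{(0)}$, in which case Lemma \ref{lem ideals and invariant open sets} applied inside $C^*(G^c)$ immediately identifies $I \cap C^*(G^c)$ with $\langle C_0(U) \rangle_{C^*(G^c)}$.
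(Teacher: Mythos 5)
Your proposal is correct and follows essentially the same route as the paper's proof: reduce to the fixed-point algebra via Lemma \ref{lem fixed point algebra} and the identification $C^*(G)^\gamma = C^*(G^c)$, then use the AF structure to show $I \cap C^*(G^c) = \langle C_0(U)\rangle_{C^*(G^c)}$. The only (cosmetic) difference is that the paper packages the AF step via Renault's Proposition III.1.15 and Bratteli's Lemma 3.1 --- writing $C^*(G^c)$ as a closure of an increasing union of finite-dimensional subalgebras $B_n$ with masas $D_n \subseteq C_0(G^{(0)})$, so that each $C_n = I \cap B_n$ satisfies $C_n = \langle C_n \cap D_n\rangle_{B_n}$ --- whereas you phrase it via elementary open subgroupoids and $c_0$-sums of matrix algebras, which requires the same inductive-limit argument for ideals that you correctly flag as the point needing justification.
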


\begin{proof}
It is clear that $I \supseteq \langle C_0(U) \rangle_{C^*(G)}$. Before proving the reverse containment, we will prove the following claim: $I \cap C^*(G)^\gamma = \langle C_0(U) \rangle_{C^*(G^c)}$. Since $C_0(U) \subseteq I$ it is clear that $I \cap C^*(G)^\gamma \supseteq \langle C_0(U) \rangle_{C^*(G^c)}$.  For the reverse containment we use the facts that $C^*(G)^\gamma = C^*(G^c)$ and that $G^c$ is an AF groupoid. As in the proof of \cite[Proposition III.1.15]{ren1}, we may write $C^*(G^c) = \overline{\cup_n B_n}$ where $B_n \subseteq B_{n+1}$ are finite dimensional $C^*$-algebras, and we may write $C_0(G^{(0)}) = \overline{\cup_n D_n}$ where $D_n \subseteq D_{n+1}$ and $D_n$ is a maximal abelian subalgebra (masa) of $B_n$. By \cite[Lemma 3.1]{bra} we have that $I \cap C^*(G^c) = \overline{\cup_n C_n}$, where $C_n \subseteq C_{n+1}$ and $C_n$ is an ideal in $B_n$. Then $C_0(U) = I \cap C_0(G^{(0)}) = \overline{\cup_n (C_n \cap D_n)}$. Note that $C_n = \langle C_n \cap D_n \rangle_{B_n}$. Then
\[
I \cap C^*(G^c) = \overline{\cup_n C_n}
= \overline{\cup_n \langle C_n \cap D_n \rangle_{B_n}}
\subseteq \langle I \cap C_0(G^{(0)}) \rangle_{C^*(G^c)}
= \langle C_0(U) \rangle_{C^*(G^c)}.
\]
This finishes the proof of the claim. Now we have
\begin{align*}
I &= \langle I \cap C^*(G^c) \rangle_{C^*(G)}, \text{ by Lemma \ref{lem fixed point algebra},} \\
&= \bigl\langle \langle C_0(U) \rangle_{C^*(G^c)} \bigr\rangle_{C^*(G)} \\
&= \langle C_0(U) \rangle_{C^*(G)}. \qedhere
\end{align*}
\end{proof}

\begin{Corollary} \label{cor AF fixed point groupoid}

In the context of Lemma \ref{lem AF fixed point groupoid}, the map $U \mapsto \langle C_0(U) \rangle_{C^*(G)}$ is a bijection between the open $G$-invariant subsets of $G^{(0)}$ and the $\gamma$-invariant ideals of $C^*(G)$.

\end{Corollary}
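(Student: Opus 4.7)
The plan is to establish the bijection by combining the two preceding lemmas. There are three items to check: well-definedness of the map, injectivity, and surjectivity; the real content lies in Lemma \ref{lem AF fixed point groupoid}.

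First I would verify well-definedness. For an open $G$-invariant $U$, the functions in $C_0(U)$ lie in $C_0(G^{(0)}) \subseteq C^*(G^c) = C^*(G)^\gamma$, so they are $\gamma$-fixed. The ideal they generate is then $\gamma$-invariant, since each $\gamma_z$ permutes the generating set trivially.

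Injectivity is immediate from Lemma \ref{lem ideals and invariant open sets}: for open $G$-invariant $U_1,U_2$ with $\langle C_0(U_1)\rangle_{C^*(G)} = \langle C_0(U_2)\rangle_{C^*(G)}$, intersecting both sides with $C_0(G^{(0)})$ recovers $C_0(U_1) = C_0(U_2)$, and hence $U_1 = U_2$.

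For surjectivity, fix a $\gamma$-invariant ideal $I$ in $C^*(G)$. Its intersection $I \cap C_0(G^{(0)})$ is a closed ideal in the commutative $C^*$-algebra $C_0(G^{(0)})$, so it equals $C_0(U)$ for a unique open $U \subseteq G^{(0)}$. I would then verify that $U$ is $G$-invariant by a standard bisection argument: given $g \in G$ with $s(g) \in U$, pick a compact open bisection $V \ni g$, choose $f \in C_0(U \cap s(V))$ with $f(s(g)) \neq 0$, and note that since $I$ is a two-sided ideal the element $\chi_V f \chi_V^* \in I$ belongs to $C_0(r(V)) \subseteq C_0(G^{(0)})$ and takes the nonzero value $f(s(g))$ at $r(g)$; hence $r(g) \in U$. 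With $G$-invariance of $U$ in hand, Lemma \ref{lem AF fixed point groupoid} applies and yields $I = \langle C_0(U) \rangle_{C^*(G)}$, closing the loop.

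The heart of the argument is already packaged inside Lemma \ref{lem AF fixed point groupoid}; the only step beyond quoting the two lemmas is the $G$-invariance of the open set attached to a $\gamma$-invariant ideal, which is a brief computation with bisections and not a genuine obstacle.
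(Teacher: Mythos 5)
Your proof is correct and follows the same route as the paper, which simply cites Lemmas \ref{lem ideals and invariant open sets} and \ref{lem AF fixed point groupoid}. The only content you add beyond those citations is the bisection computation showing that the open set $U$ with $I \cap C_0(G^{(0)}) = C_0(U)$ is $G$-invariant --- a hypothesis Lemma \ref{lem AF fixed point groupoid} assumes and the paper leaves implicit --- and that computation is standard and correct.
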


\begin{proof}
This follows from Lemmas \ref{lem ideals and invariant open sets} and \ref{lem AF fixed point groupoid}.
\end{proof}

We recall the \textit{gauge action} on the Toeplitz algebra of a directed graph $E$. There is an action $\gamma$ of the circle group $\IT$ on $\CT C^*(E)$ defined by $\gamma_z(p_v) = p_v$ for $v \in E^0$, and $\gamma_z(s_e) = z s_e$ for $e \in E^1$. (This is the dual action (as discussed before Lemma \ref{lem AF fixed point groupoid}) to the \textit{length cocycle} $c : G(E) \to \IZ$, defined by $c([\alpha,\beta,x]) = |\alpha| - |\beta|$.) By Corollary \ref{cor AF fixed point groupoid} we know that the gauge-invariant ideals of $\CT C^*(E)$ are in one-to-one correspondence with the open invariant subsets of $G(E)^{(0)}$. We now describe the open invariant subsets of $G(E)^{(0)}$.

\begin{Lemma} \label{lem open invariant sets one}

Let $H \subseteq E^0$ be a hereditary subset and let $F = F_H$ as in Definition \ref{def hereditary}. Let $B \subseteq \reg{F}$. Let $U(H,B):= E^* H E^\infty \cup E^*(H \cup B)$. Then $U(H,B)$ is open and invariant. Moreover, $H = \{ v \in E^0 : Z(v) \subseteq U(H,B) \}$ and $B = F^0 \cap U(H,B)$.

\end{Lemma}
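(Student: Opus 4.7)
The plan is to verify four assertions in turn: openness of $U(H,B)$, its $G(E)$-invariance, the equality $F^0 \cap U(H,B) = B$, and the equality $H = \{v \in E^0 : Z(v) \subseteq U(H,B)\}$. Throughout, the key tools are the hereditary property of $H$ (equivalently, the cohereditary of $F^0$), the decomposition $vE^1 = vF^1 \sqcup vE^1 H$ for $v \in F^0$, and the regularity of vertices in $B \subseteq \reg{F}$.

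For openness I would exhibit a basic neighborhood of each point of $U(H,B)$ inside $U(H,B)$. If $\alpha \in E^*H$ (or, by writing $x \in E^*HE^\infty$ as $x = \alpha y$ with $s(\alpha) \in H$, also for such $x$), the set $Z(\alpha)$ itself works: any finite extension $\alpha\beta$ has $r(\beta) = s(\alpha) \in H$, so hereditary applied to $\beta$ gives $s(\beta) \in H$ and hence $\alpha\beta \in E^*H$; any infinite extension passes through $s(\alpha) \in H$ and so lies in $E^*HE^\infty$. For $\alpha \in E^*B$ with $v = s(\alpha)$, I use $N := Z(\alpha) \setminus \bigsqcup_{e \in vF^1} Z(\alpha e)$, a basic open set because $vF^1$ is finite by regularity of $v$ in $F$. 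The cohereditary decomposition gives $N = \{\alpha\} \sqcup \bigsqcup_{e \in vE^1H} Z(\alpha e)$, and since $s(\alpha e) = s(e) \in H$ for each such $e$, the previous case shows each $Z(\alpha e) \subseteq U(H,B)$.

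For invariance I would show that if $g = [\alpha,\beta,x]$ and $\beta x \in U(H,B)$ then $\alpha x \in U(H,B)$. If $x$ is finite, both $\beta x$ and $\alpha x$ are finite with common source $s(x)$, and membership in $U(H,B)$ is determined by whether $s(x) \in H \cup B$. If $x$ is infinite and $\beta x \in E^*HE^\infty$, fix a vertex $w \in H$ on $\beta x$; if $w$ lies on $x$ we are done, otherwise $w$ is the range of some edge in $\beta$, and the tail of $\beta$ from $w$ onward is a path in $wE^*$ with source $r(x)$, so hereditary forces $r(x) \in H$, which is a vertex of $x$ and returns us to the first subcase.

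The equality $F^0 \cap U(H,B) = B$ is immediate, since $F^0$ is disjoint from $H$ and contains no infinite paths. For $H = \{v \in E^0 : Z(v) \subseteq U(H,B)\}$, the inclusion $\supseteq$ is the hereditary argument from openness. For $\subseteq$, let $v \in E^0 \setminus H$. If $v \notin B$ as well, then $v$ itself is a vertex in $Z(v)$ with source in $F^0 \setminus (H \cup B)$, hence not in $U(H,B)$. If $v \in B$, I construct a witness iteratively: since $B \subseteq \reg{F}$, pick $e_1 \in vF^1$; if $s(e_1) \notin B$ then the length-one path $e_1$ already witnesses $Z(v) \not\subseteq U(H,B)$, otherwise $s(e_1) \in B \subseteq \reg{F}$ and I extend again. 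If the construction never terminates it yields an element of $vF^\infty \subseteq Z(v)$ whose vertices all lie in $F^0$, hence outside both $E^*HE^\infty$ and $E^*(H \cup B)$. This last dichotomy, which relies on regularity of each encountered source in $B$ to keep the iteration alive until it produces either a finite witness leaving $B$ or an infinite witness in $vF^\infty$, is the main obstacle.
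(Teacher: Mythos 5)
Your proposal is correct and follows essentially the same route as the paper: the same basic neighborhoods for openness (with $Z(\alpha)\setminus\bigcup_{e\in s(\alpha)F^1}Z(\alpha e)$ at points of $E^*B$), and the same construction of an infinite path in $vF^\infty$ to rule out $Z(v)\subseteq U(H,B)$ for $v\in B$; you merely spell out the invariance that the paper declares clear, and phrase the last step as a contrapositive dichotomy rather than a contradiction. The only blemish is that your labels $\subseteq$ and $\supseteq$ for the two inclusions of $H=\{v:Z(v)\subseteq U(H,B)\}$ are swapped relative to the arguments you attach to them.
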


\begin{proof}
It is clear from its definition that $U(H,B)$ is invariant. We show that it is open. Let $x \in U(H,B)$. First suppose that $x \in E^\infty$. Then $x \in E^* H E^\infty$, so we may write $x = \alpha y$ where $s(\alpha) \in H$. Then $Z(s(\alpha)) \subseteq H E^\infty \cup H E^* \subseteq U(H,B)$. Therefore $Z(\alpha) = \alpha Z(s(\alpha)) \subseteq U(H,B)$ is a neighborhood of $x$. Next suppose that $x \in E^* H$. Then $s(x) \in H$, so as before, $x Z(s(x))$ is a neighborhood of $x$ contained in $U(H,B)$. Lastly suppose that $x \in E^* B$. Then $s(x) \in B \subseteq \reg{F}$, hence $s(x) F^1$ is finite. For $e \in s(x) E^1 H$, $Z(s(e)) \subseteq U(H,B)$. Then
\[
Z(s(x)) \setminus \bigcup_{e \in s(x) F^1} Z(e)
= \{ s(x) \} \cup \bigcup_{e \in s(x) E^1 H} Z(e)
= \{ s(x) \} \cup \bigcup_{e \in s(x) E^1 H} e Z(s(e))
\subseteq U(H,B).
\]
Therefore $Z(x) \setminus \bigcup_{e \in s(x) F^1} Z(xe)$ is a neighborhood of $x$ in $U(H,B)$.

For the last statement, it is clear that $H \subseteq \{ v : Z(v) \subseteq U(H,B) \}$ and $B \subseteq F^0 \cap U(H,B)$. We prove the reverse inclusions. First let $v \in E^0$ with $Z(v) \subseteq U(H,B)$. Then $v \in U(H,B)$, so $v \in H \cup B$. To show that $v \in H$, we suppose that $v \in B$ and deduce a contradiction. We know that $v E^* \subseteq Z(v) \subseteq U(H,B)$, so $s(v E^*) \subseteq H \cup B$. Note that since $B \subseteq \reg{F}$, if $\alpha \in v E^* B$ then $s(\alpha) F^1 \not= \varnothing$. Applying this to $v$ we find $e_1 \in v F^1$. Applying it to $e_1$ we find $e_2 \in s(e_1) F^1$. Inductively we obtain $x = e_1 e_2 \cdots \in v F^\infty$. Since $x \in Z(v) \subseteq U(H,B)$, this is impossible - all infinite paths in $U(H,B)$ arise in $H$. Therefore $v \in H$ as claimed. Finally, suppose that $v \in F^0 \cap U(H,B)$. Again we must have $v \in H \cup B$. Since $v \in F^0$ it follows that $v \in B$.
\end{proof}

\begin{Lemma} \label{lem open invariant sets two}

Let $U \subseteq G(E)^{(0)}$ be open and invariant. Let $H := \{ v \in E^0 : Z(v) \subseteq U \}$ and $B = U \cap (E^0 \setminus H)$. Then $H$ is hereditary in $E$. Letting $F = F_H$, we have that $B \subseteq \reg{F}$. 
Finally, $U = U(H,B)$ (where $U(H,B)$ is as in the statement of Lemma \ref{lem open invariant sets one}).

\end{Lemma}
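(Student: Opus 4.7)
Throughout I will rely on two tools: invariance of $U$ propagated via the groupoid element $[\alpha,s(\alpha),y]$, whose range is $\alpha y$ and source is $y$, so that $\alpha y \in U$ if and only if $y \in U$; and the explicit basis of $G(E)^{(0)}$ by sets $Z(\alpha) \setminus \bigcup_{i=1}^n Z(\beta_i)$. For hereditariness, fix $v \in H$ and $\alpha \in vE^*$ with $s(\alpha) = w$. For every $y \in Z(w)$, $\alpha y \in Z(v) \subseteq U$, so invariance forces $y \in U$; hence $Z(w) \subseteq U$, i.e., $w \in H$.

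\textbf{Step 2 ($B \subseteq \reg{F}$).} Fix $v \in B$, so that $v \in U \setminus H$. Openness provides a basic neighborhood $Z(v) \setminus \bigcup_{i=1}^n Z(\beta_i) \subseteq U$ of $v$, where after discarding irrelevant $\beta_i$'s one may assume each $\beta_i \in vE^*$ has length at least one. Let $E_0 \subseteq vE^1$ be the finite set of first edges of the $\beta_i$'s. For any $e \in vE^1 \setminus E_0$, $Z(e)$ is disjoint from every $Z(\beta_i)$ (their first edges differ), so $Z(e) \subseteq U$; by invariance, $Z(s(e)) \subseteq U$, so $s(e) \in H$ and $e \notin F^1$. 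Therefore $vF^1 \subseteq E_0$ is finite. Finally, if $vF^1 = \varnothing$, every $e \in vE^1$ (possibly none) has $s(e) \in H$ and hence $Z(e) \subseteq U$; combined with $v \in U$ and the partition $Z(v) = \{v\} \sqcup \bigsqcup_{e \in vE^1} Z(e)$, this would give $Z(v) \subseteq U$ and $v \in H$, a contradiction.

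\textbf{Step 3 ($U = U(H,B)$).} The inclusion $U(H,B) \subseteq U$ is immediate from invariance: if $\alpha \in E^* H$ then $s(\alpha) \in H \subseteq U$ and the triple $[\alpha,s(\alpha),s(\alpha)]$ carries $s(\alpha) \in U$ to $\alpha \in U$; the case $\alpha \in E^* B$ is identical; and if $\alpha y \in E^* H E^\infty$ with $r(y) = s(\alpha) \in H$, then $y \in Z(s(\alpha)) \subseteq U$, so by invariance $\alpha y \in U$. For the reverse inclusion, take $x \in U$. If $x \in E^*$, invariance puts $s(x) \in U$, and either $s(x) \in H$ (so $x \in E^* H$) or $s(x) \in U \setminus H = B$ (so $x \in E^* B$). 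If $x = e_1 e_2 \cdots \in E^\infty$, choose a basic neighborhood $Z(\alpha) \setminus \bigcup Z(\beta_i) \subseteq U$ of $x$ with $\alpha$ a prefix of $x$, and lengthen $\alpha$ to the prefix $\alpha' = e_1 \cdots e_m$ with $m \geq \max_i |\beta_i|$. Any $\beta_i$ that prefixes some $y \in Z(\alpha')$ would, since $|\beta_i| \leq |\alpha'|$, prefix $\alpha'$ and hence $x$, contradicting $x \notin Z(\beta_i)$. Thus $Z(\alpha') \subseteq U$, so $s(\alpha') \in H$ by invariance, placing $x \in E^* H E^\infty$.

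The main obstacle is Step 2, specifically showing that $vF^1$ is simultaneously nonempty and finite. The finiteness uses openness in an essential way to trap $vF^1$ inside the finite set of first edges coming from the excluded paths in a basic neighborhood of $v$, while the nonemptiness requires combining the invariance principle with the explicit decomposition of $Z(v)$ into the singleton $\{v\}$ and the edge neighborhoods $Z(e)$, in order to derive the desired contradiction with $v \notin H$.
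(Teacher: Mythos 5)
Your proof is correct and follows essentially the same route as the paper's: invariance of $U$ yields hereditariness of $H$ and the inclusion $U(H,B)\subseteq U$, the decomposition $Z(v)=\{v\}\sqcup\bigsqcup_{e\in vE^1}Z(e)$ together with a basic open neighborhood of $v$ gives both nonemptiness and finiteness of $vF^1$, and lengthening the prefix past the excluded $\beta_i$'s handles infinite paths in $U$. Your treatment is in fact slightly more careful than the paper's at two points (discarding irrelevant $\beta_i$'s, and ensuring $Z(\alpha')\subseteq Z(\alpha)$ by taking $m\geq\max_i|\beta_i|$), but the argument is the same.
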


\begin{proof}
We begin by showing that $H$ is hereditary. Let $\alpha \in H E^*$. Then $Z(r(\alpha) \subseteq U$. Since $Z(\alpha) \subseteq Z(r(\alpha))$ we have that $Z(\alpha) \subseteq U$. But $Z(\alpha) = \alpha Z(s(\alpha))$. Since $U$ is invariant, $Z(s(\alpha)) \subseteq U$, hence $s(\alpha) \in H$. Therefore $H$ is hereditary.

Next we show that $B \subseteq \reg{F}$. Let $v \in B$. Then
\[
Z(v)
= \{ v \} \cup \bigcup_{e \in v E^1} Z(e)
= \{ v \} \cup \bigcup_{e \in v F^1} Z(e) \cup \bigcup_{e \in v E^1 H} e Z(s(e)).
\]
Since $Z(w) \subseteq U$ for $w \in H$, the invariance of $U$ implies that $\{v\} \cup \bigcup_{e \in v E^1 H} e Z(s(e)) \subseteq U$. Since $Z(v) \not\subseteq U$ it follows that $v F^1 \not= \varnothing$. Since $U$ is open there are $\alpha_1,\ldots,\alpha_n \in v E^*$ such that $Z(v) \setminus \bigcup_{i=1}^n Z(\alpha_i) \subseteq U$. Let $e_1,\ldots,e_n \in E^1$ be such that $\alpha_i \in Z(e_i)$ for $1 \le i \le n$. For $e \in v E^1 \setminus \{ e_1,\ldots,e_n \}$ we have $Z(e) \subseteq Z(v) \setminus Z(\alpha_i) \subseteq U$, hence $s(e) \in H$. Therefore $e \not\in F^1$. Thus $v F^1 \subseteq \{e_1,\ldots,e_n\}$ is finite. It follows that $v \in \reg{F}$.

Finally we show that $U(H,B) = U$. The invariance of $U$ implies that $U(H,B) \subseteq U$. We prove the reverse containment. Let $x \in U$. Since $U$ is open there are $\alpha,\beta_1,
\ldots,\beta_n \in r(x) E^*$ such that $x \in Z(\alpha) \setminus \bigcup_{i=1}^n Z(\beta_i) \subseteq U$. If $x \in E^\infty$ we may choose $\gamma \in r(x) E^*$ such that $x \in Z(\gamma)$ and $|\gamma| > |\beta_i|$ for $1 \le i \le n$. Then $Z(\gamma) \cap Z(\beta_i) = \varnothing$ for $1 \le i \le n$, so $Z(\gamma) \subseteq U$. But then $s(\gamma) \in H$, so $x \in E^* H E^\infty \subseteq U(H,B)$. If $x \in E^*$ then since $U$ is invariant, $s(x) \in U$. Then $s(x) \in H \cup B$, so $x \in E^*(H \cup B) \subseteq U(H,B)$.
\end{proof}

We summarize the above:

\begin{Theorem} \label{thm gauge-invariant ideals}

Let $E$ be a directed graph with groupoid $G(E)$. The gauge-invariant ideals in $\CT C^*(E)$ are in one-to-one correspondence with the open $G(E)$-invariant subsets of $G(E)^{(0)}$, where an open invariant set $U$ corresponds to the ideal generated by $C_0(U)$. The open invariant subsets of $G(E)^{(0)}$ are in one-to-one correspondence with pairs $(H,B)$, where $H \subseteq E^0$ is a hereditary subset, and $B \subseteq \reg{F_H}$. The open invariant set corresponding to the pair $(H,B)$ is $U(H,B) = E^* H E^\infty \cup E^* (H \cup B)$.

\end{Theorem}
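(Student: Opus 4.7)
The statement is a summary theorem, so the proposal is to assemble the pieces already in place rather than start fresh.

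The plan is to deduce the first bijection from Corollary \ref{cor AF fixed point groupoid} and the second from Lemmas \ref{lem open invariant sets one} and \ref{lem open invariant sets two}. For the first bijection, I will take $G = G(E)$, $\Gamma = \IZ$, and the length cocycle $c : G(E) \to \IZ$ given by $c([\alpha,\beta,x]) = |\alpha|-|\beta|$. Its dual action of $\widehat{\IZ} = \IT$ on $C^*(G(E)) = \CT C^*(E)$ is precisely the gauge action defined just before Lemma \ref{lem open invariant sets one}, since on generators it sends $s_e = \chi_{[e,s(e),Z(s(e))]}$ to $z s_e$ (as $c([e,s(e),\cdot]) = 1$) and fixes each $p_v$ (as $c([v,v,\cdot]) = 0$). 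The hypothesis of Corollary \ref{cor AF fixed point groupoid} — namely that $G(E)^c$ is AF — is exactly the fact cited from \cite[Theorem 9.8]{spi2}. With all hypotheses verified, Corollary \ref{cor AF fixed point groupoid} immediately gives the bijection $U \mapsto \langle C_0(U)\rangle_{\CT C^*(E)}$ between open invariant $U \subseteq G(E)^{(0)}$ and gauge-invariant ideals of $\CT C^*(E)$.

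For the second bijection, I define maps in both directions. Given a pair $(H,B)$ with $H \subseteq E^0$ hereditary and $B \subseteq \reg{F_H}$, assign the set $U(H,B) = E^* H E^\infty \cup E^*(H \cup B)$; by Lemma \ref{lem open invariant sets one} this is open and invariant, and the pair $(H,B)$ is recovered from $U(H,B)$ via the formulas $H = \{v \in E^0 : Z(v) \subseteq U(H,B)\}$ and $B = F_H^0 \cap U(H,B)$. Conversely, given an open invariant $U \subseteq G(E)^{(0)}$, set $H = \{v : Z(v) \subseteq U\}$ and $B = U \cap (E^0 \setminus H)$; by Lemma \ref{lem open invariant sets two}, $H$ is hereditary, $B \subseteq \reg{F_H}$, and $U = U(H,B)$. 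These two assignments are therefore mutual inverses, establishing the bijection between open invariant subsets of $G(E)^{(0)}$ and pairs $(H,B)$.

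Composing the two bijections yields the final map $(H,B) \mapsto \langle C_0(U(H,B)) \rangle_{\CT C^*(E)}$ between such pairs and gauge-invariant ideals of $\CT C^*(E)$, which is the content of the theorem. There is no real obstacle here since the substantive work — verifying that $G(E)^c$ is AF, and the two lemmas matching open invariant sets with pairs $(H,B)$ — has already been carried out; the only thing to check carefully is that the dual action of the length cocycle coincides with the gauge action on generators, which is a one-line verification.
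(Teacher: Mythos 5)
Your proposal is correct and matches the paper's intent exactly: the theorem is stated as a summary ("We summarize the above"), obtained by combining Corollary \ref{cor AF fixed point groupoid} (applied to the length cocycle, whose dual action is the gauge action and whose kernel groupoid is AF) with Lemmas \ref{lem open invariant sets one} and \ref{lem open invariant sets two}. Your assembly of these pieces, including the verification that the dual action agrees with the gauge action on generators, is precisely the argument the paper has in mind.
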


\begin{Remark}

In Theorem \ref{thm gauge-invariant ideals}, the gauge-invariant ideal corresponding to the open invariant set $U(H,B)$ is $J(E,\varnothing;F,B)$ (as in Definition \ref{def J(E,A;F,B)}).

\end{Remark}

\begin{Remark}

Our classification of the gauge invariant ideals in $\CT C^*(E)$ is equivalent to that given in \cite[Theorem 2.11]{ek} by hereditary and partially saturated sets.

\end{Remark}

\section{Relative graphs} \label{section relative graphs}

\vspace*{.1 in}

\begin{Definition} \label{definition relative graph}

A \textit{relative graph} is a pair $(F,B)$ consisting of a directed graph $F$ and a subset $B$ of the regular vertices of $F$. A \textit{morphism} $\alpha : (F,B) \to (E,A)$ of relative graphs is an injective homomorphism of directed graphs $F \hookrightarrow E$, which we take to be an inclusion, such that

\begin{enumerate}[(1)]

\item \label{relative graph 1} $H_{F,E} := E^0 \setminus F^0$ is a hereditary set in $E$

\item \label{relative graph 2} $F^1 = F^0  E^1 F^0 \ (= \{e \in E^1 : s(e), r(e) \in F^0 \})$.

\item \label{relative graph 3} $A \cap F^0 \subseteq B$ 

\end{enumerate}

\end{Definition}

This definition describes the situation where $\CT C^*(F,B) \cong \CT C^*(E,A) / J(E,A;F,B)$, by Theorem \ref{thm larger ideal}. (Note that $F_{H_{F,E}} = F$.)

\begin{Lemma} \label{lemma composition}

Let $\alpha_1 : (F_1,A_1) \to (F_2,A_2)$ and $\alpha_2 : (F_2,A_2) \to (F_3,A_3)$ be morphisms of relative graphs. Then $\alpha_2 \circ \alpha_1 : (F_1,A_1) \to (F_3,A_3)$ is a morphism of relative graphs.

\end{Lemma}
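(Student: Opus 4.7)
The plan is to directly verify the three conditions in Definition \ref{definition relative graph} for the composition, using the fact that $F_1^0 \subseteq F_2^0 \subseteq F_3^0$ (since each morphism is an inclusion) and noting the disjoint decomposition
\[
H_{F_1,F_3} = F_3^0 \setminus F_1^0 = (F_3^0 \setminus F_2^0) \sqcup (F_2^0 \setminus F_1^0) = H_{F_2,F_3} \sqcup H_{F_1,F_2}.
\]
Condition \eqref{relative graph 3} will be essentially immediate: if $v \in A_3 \cap F_1^0$, then $v \in A_3 \cap F_2^0 \subseteq A_2$ (by condition \eqref{relative graph 3} for $\alpha_2$), and then $v \in A_2 \cap F_1^0 \subseteq A_1$ (by condition \eqref{relative graph 3} for $\alpha_1$).

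For condition \eqref{relative graph 2}, one inclusion is trivial: $F_1^1 \subseteq F_1^0 F_3^1 F_1^0$ since $F_1^1 \subseteq F_2^1 \subseteq F_3^1$ and $F_1^1$ already has source and range in $F_1^0$. For the reverse inclusion, given $e \in F_1^0 F_3^1 F_1^0$, the endpoints of $e$ lie in $F_1^0 \subseteq F_2^0$, so by condition \eqref{relative graph 2} for $\alpha_2$ we have $e \in F_2^0 F_3^1 F_2^0 = F_2^1$, and then $e \in F_1^0 F_2^1 F_1^0 = F_1^1$ by condition \eqref{relative graph 2} for $\alpha_1$.

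The step requiring the most care is condition \eqref{relative graph 1}, heredity of $H_{F_1,F_3}$ in $F_3$. Let $v \in H_{F_1,F_3}$ and let $e \in v F_3^1$; we must show $s(e) \in H_{F_1,F_3}$. Split into two cases according to the decomposition above. If $v \in H_{F_2,F_3}$, then heredity of $H_{F_2,F_3}$ in $F_3$ gives $s(e) \in H_{F_2,F_3} \subseteq H_{F_1,F_3}$. The interesting case is $v \in H_{F_1,F_2} = F_2^0 \setminus F_1^0$: here $r(e) = v \in F_2^0$, so either $s(e) \notin F_2^0$, placing $s(e) \in H_{F_2,F_3} \subseteq H_{F_1,F_3}$, or $s(e) \in F_2^0$, in which case condition \eqref{relative graph 2} for $\alpha_2$ forces $e \in F_2^1$, and then heredity of $H_{F_1,F_2}$ in $F_2$ gives $s(e) \in H_{F_1,F_2} \subseteq H_{F_1,F_3}$. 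This case analysis, which is the only nontrivial piece, turns on the fact that condition \eqref{relative graph 2} is what lets us pass an edge of $F_3$ between vertices of $F_2^0$ down to an edge of $F_2^1$; without it the heredity would fail to compose.

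Finally, the injectivity of $\alpha_2 \circ \alpha_1$ and its being a homomorphism of directed graphs are immediate from composing inclusions, so no additional verification is needed.
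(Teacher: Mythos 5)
Your proof is correct and follows essentially the same route as the paper's: the decomposition $H_{F_1,F_3} = H_{F_2,F_3} \sqcup H_{F_1,F_2}$, the use of condition (2) for $\alpha_2$ to pull an edge of $F_3$ with both endpoints in $F_2^0$ down to $F_2^1$, and then heredity of $H_{F_1,F_2}$ in $F_2$. The only difference is presentational: you argue element-wise with a case split on whether $s(e) \in F_2^0$, where the paper uses set-algebra identities and shows $H_{F_1,F_2}F_3^1 F_1^0 = \varnothing$.
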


\begin{proof}
We verify the conditions for a morphism in Definition \ref{definition relative graph}. Since $F_1 \hookrightarrow F_2$ and $F_2 \hookrightarrow F_3$ we have $F_1 \hookrightarrow F_3$. For convenience we will write $H_{ij}$ for $H_{F_i,F_j}$. We first show that $H_{13}$ is hereditary in $F_3$. Thus we must show that $H_{13} F_3^1 = H_{13} F_3^1 H_{13}$. Note that $H_{13} = F_3^0 \setminus F_1^0 = (F_3^0 \setminus F_2^0) \sqcup (F_2^0 \setminus F_1^0) = H_{23} \sqcup H_{12}$. Then $H_{13} F_3^1 = H_{23} F_3^1 \sqcup H_{12} F_3^1$. Since $\alpha_2$ is a morphism we know that $H_{23}$ is hereditary in $F_3$. Therefore $H_{23} F_3^1 = H_{23} F_3^1 H_{23} \subseteq H_{13} F_3^1 H_{13}$. Next, since $F_3^0 = H_{13} \sqcup F_1^0$, we have $H_{12} F_3^1 =  H_{12} F_3^1 H_{13} \sqcup H_{12} F_3^1 F_1^0$. But $H_{12}F_3^1 F_1^0 \subseteq F_2^0 F_3^1 F_2^0 = F_2^1$, by \eqref{relative graph 2} for $\alpha_2$, and hence $H_{12} F_3^1 F_1^0 \subseteq H_{12} F_2^1 F_1^0 = \varnothing$, since $H_{12}$ is hereditary in $F_2$. Therefore $H_{12} F_3^1 \subseteq H_{13} F_3^1 H_{13}$, and hence $H_{13}$ is hereditary in $F_3$. This finishes the verification of \eqref{relative graph 1} for $\alpha_2 \circ \alpha_1$.

Next, note that $F_1^0 F_3^1 F_1^0 \subseteq F_2^0 F_3^1 F_2^0 = F_2^1$ by \eqref{relative graph 2} for $\alpha_2$. Then $F_1^0 F_3^1 F_1^0 = F_1^0 F_2^1 F_1^0 = F_1^1$ by \eqref{relative graph 2} for $\alpha_1$. Thus \eqref{relative graph 2} holds for $\alpha_2 \circ \alpha_1$.

Finally, to verify \eqref{relative graph 3}, note that $A_3 \cap F_1^0 \subseteq A_3 \cap F_2^0 \subseteq A_2$, hence $A_3 \cap F_1^0 = A_3 \cap A_2 \cap F_1^0 \subseteq A_2 \cap F_1^0 \subseteq A_1$.
\end{proof}

\begin{Theorem} \label{thm pushout}

The category of relative graphs admits pushouts.

\end{Theorem}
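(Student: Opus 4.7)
The plan is to construct the pushout first at the graph level via the standard pushout in the category of directed graphs, then equip it with a specific subset of regular vertices. Treating the $\alpha_i : (F_0, A_0) \to (F_i, A_i)$ as inclusions, set $F^0 := F_1^0 \cup F_2^0$ (with $F_1^0 \cap F_2^0 = F_0^0$) and $F^1 := F_1^1 \cup F_2^1$ (with $F_1^1 \cap F_2^1 = F_0^1$, which equals $F_0^0 F_i^1 F_0^0$ inside each $F_i$ by condition \eqref{relative graph 2}); the range and source maps descend because they agree on the overlap $F_0^1$. The inclusions $\beta_i : F_i \hookrightarrow F$ are the structure morphisms. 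For the regular-vertex subset, set
\[
A := (A_1 \cap A_2 \cap F_0^0) \cup (A_1 \setminus F_0^0) \cup (A_2 \setminus F_0^0),
\]
which is the largest subset of $F^0$ satisfying $A \cap F_i^0 \subseteq A_i$ for $i=1,2$.

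The main verification that $A \subseteq \reg{F}$ proceeds in cases. For $v \in A_i \setminus F_0^0$, the vertex $v$ is not in $F_j^0$ (where $j \neq i$), so $vF^1 = vF_i^1$, and $v \in \reg{F_i}$ gives $v \in \reg{F}$. For $v \in A_1 \cap A_2 \cap F_0^0$, condition \eqref{relative graph 2} on each $\alpha_i$ yields the disjoint decomposition
\[
vF^1 = vF_0^1 \sqcup vF_1^1 H_{F_0,F_1} \sqcup vF_2^1 H_{F_0,F_2},
\]
and each term is finite because $v \in \reg{F_i}$ makes $vF_i^1$ finite; the union is nonempty since each $vF_i^1$ is. Checking the three morphism conditions for $\beta_i$ then uses parallel calculations: \eqref{relative graph 1}, that $F^0 \setminus F_i^0 = H_{F_0,F_j}$ is hereditary in $F$, holds because any edge of $F^1$ into a vertex of $H_{F_0,F_j}$ must lie in $F_j^1$ (its range is outside $F_i^0$) and hereditarity of $H_{F_0,F_j}$ in $F_j$ keeps the source there; \eqref{relative graph 2} for $\beta_i$ uses that any edge in $F_i^0 F^1 F_i^0$ drawn from $F_j^1 \setminus F_0^1$ would be forced into $F_0^0 F_j^1 F_0^0 = F_0^1$, a contradiction; \eqref{relative graph 3} is immediate from the definition of $A$; and commutativity $\beta_1 \alpha_1 = \beta_2 \alpha_2$ is automatic.

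For the universal property, fix a relative graph $(G,C)$ and morphisms $\gamma_i : (F_i,A_i) \to (G,C)$ with $\gamma_1\alpha_1 = \gamma_2\alpha_2$. The graph pushout supplies a unique graph morphism $\delta : F \to G$ extending both $\gamma_i$. To see $\delta$ is a morphism of relative graphs: \eqref{relative graph 1} holds because $G^0 \setminus F^0 = (G^0 \setminus F_1^0) \cap (G^0 \setminus F_2^0)$ is an intersection of hereditary sets, hence hereditary; \eqref{relative graph 2} is a direct computation, since any edge in $F^0 G^1 F^0$ with endpoints in different $F_i^0 \setminus F_0^0$ would have its range in some $H_{F_i,G}$ and source outside, contradicting hereditarity of $H_{F_i,G}$, so both endpoints lie in a common $F_i^0$ and the edge belongs to $F_i^1$ by condition \eqref{relative graph 2} for $\gamma_i$; and \eqref{relative graph 3} is done by splitting $v \in C \cap F^0$ into the three cases $v \in F_0^0$, $v \in F_1^0 \setminus F_0^0$, $v \in F_2^0 \setminus F_0^0$, with $C \cap F_i^0 \subseteq A_i$ placing $v$ in the corresponding piece of $A$.

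The main obstacle throughout is the bookkeeping around vertices of $F_0^0$, where edges from both $F_1$ and $F_2$ contribute to $F^1$; the hereditary conditions of Definition \ref{definition relative graph} prevent pathological mixed-type edges from appearing in $F^1$ or in $F^0 G^1 F^0$, and the formula for $A$ is designed precisely to balance the constraints coming from $A_1$ and $A_2$ over $F_0^0$.
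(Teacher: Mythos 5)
Your construction is exactly the paper's: the amalgamated graph $F_1 \sqcup_{F_0} F_2$ together with $A = (A_1\setminus F_0^0)\cup(A_2\setminus F_0^0)\cup(A_1\cap A_2)$, followed by the same verifications of the morphism axioms for the structure maps and of the universal property (including the same key observations that $F^0\setminus F_i^0 = H_{F_0,F_j}$, that $G^0\setminus F^0$ is an intersection of hereditary sets, and the case analysis ruling out mixed-type edges). The argument is correct and takes essentially the same route as the paper, differing only in minor bookkeeping details such as checking regularity of vertices in $A_i\setminus F_0^0$ directly from ranges of edges rather than via hereditarity.
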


\begin{proof}
Let $\alpha_i : (F_0,A_0) \to (F_i,A_i)$, $i = 1$, 2, be morphisms of relative graphs. Define a relative graph $(E,A)$ by
\begin{align}
E &= F_1 \sqcup_{F_0} F_2 \label{pushout eqn one} \\
A &= (A_1 \setminus F_0^0) \cup (A_2 \setminus F_0^0) \cup (A_1 \cap A_2). \label{pushout eqn two}
\end{align}
(The graph $E$ is defined by $E^0 = (F_1^0 \sqcup F_2^0)/\{\alpha_1(v) = \alpha_2(v) : v \in F_0^0 \}$, $E^1 = (F_1^1 \sqcup F_2^1)/\{\alpha_1(e) = \alpha_2(e) : e \in F_0^1 \}$, $r_E(e) = r_{F_i}(e)$ for $e \in F_i^1$, and $s_E$ defined analogously.) We note that $A_1 \cap A_2 \subseteq F_1^0 \cap F_2^0 = F_0^0$. We further note that
\[
H_{F_0,F_1}
= F_1^0 \setminus F_0^0
= (F_1^0 \sqcup (F_2^0 \setminus F_0^0)) \setminus (F_0^0 \sqcup (F_2^0 \setminus F_0^0))
= E^0 \setminus F_2^0
= H_{F_2,E},
\]
and similarly, $H_{F_0,F_2} = H_{F_1,E}$. We claim that $H_{F_2,E}$ is hereditary in $E$. To see this let $e \in H_{F_2,E} E^1$. Then $r(e) \in H_{F_0,F_1} = F_1^0 \setminus F_0^0 = F_1^0 \setminus F_2^0$. But then $e \not\in F_2^1$, hence $e \in F_1^1$. Since $H_{F_0,F_1}$ is hereditary in $F_1$ we have that $s(e) \in H_{F_0,F_1} = H_{F_2,E}$. This proves the claim. Similarly, $H_{F_1,E}$ is hereditary in $E$.

We now show that $A \subseteq \reg{E}$. Let $v \in A$. First suppose that $v \in A_1 \setminus F_0^0 \subseteq \reg{F_1} \cap H_{F_0,F_1}$. Since $H_{F_0,F_1}= H_{F_2,E}$ is hereditary in $E$, $v E^1 = v E^1 H_{F_2,E} \subseteq v E^1 (E^0 \setminus F_2^0) \subseteq v E^1 F_1^0 \subseteq v F^1$. Then since $v \in \reg{F_1}$, $v \in \reg{E}$. An analogous argument shows that $A_2 \setminus F_0^0 \subseteq \reg{E}$. Finally, $A_1 \cap A_2 \subseteq \reg{F_1} \cap \reg{F_2} \subseteq \reg{E}$. Therefore $A \subseteq \reg{E}$.

Let $\beta_i : (F_i,A_i) \to (E,A)$ for $i = 1$, 2. We claim that $\beta_1$ and $\beta_2$ are morphisms of relative graphs. For the proof, first note that we have already shown that $H_{F_i,E}$ is hereditary in $E$ for $i = 1,2$. Equation \eqref{relative graph 3} for $\beta_1$ and $\beta_2$ is immediate from the definition of $A$. For \eqref{relative graph 2}, we have \begin{align*}
F_i^0 E^1 F_i^0
&= (F_0^0 \sqcup H_{F_0,F_i}) E^1 (F_0^0 \sqcup H_{F_0,F_i}) \\
&= F_0^0 E^1 F_0^0 \sqcup F_0^0 E^1 H_{F_0,F_i} \sqcup H_{F_0,F_i} E^1 F_0^0 \sqcup H_{F_0,F_i}E^1 H_{F_0,F_i} \\
&= F_0^1 \sqcup F_0 F_i^1 H_{F_0,F_i} \sqcup \varnothing \sqcup H_{F_0,F_i} F_i^1 \\
&= F_i^1.
\end{align*}
It is clear that $\beta_1 \alpha_1 = \beta_2 \alpha_2$.

Now let $(G,B)$ be a relative graph and let $\gamma_i : (F_i,A_i) \to (G,B)$ be morphisms such that $\gamma_1 \alpha_1 = \gamma_2 \alpha_2$. Then the inclusions $F_i \hookrightarrow G$ agree on $F_0$, so there is an inclusion $E \hookrightarrow G$. We claim that $\phi : (E,A) \to (G,B)$ is a morphism. It is clear that then $\gamma_i = \phi \beta_i$ for $i = 1$, 2. We verify \eqref{relative graph 1} - \eqref{relative graph 3} for $\phi$. For \eqref{relative graph 3} first note that $B \cap E^0 = (B \cap F_1^0) \cup (B \cap F_2^0) \subseteq A_1 \cup A_2$. Next we have $B \cap F_0^0 = B \cap F_1^0 \cap F_2^0 \subseteq A_1 \cap A_2$. Then we have
\[
B \cap E^0
= (B \cap E^0 \setminus F_0^0) \cup (B \cap F_0^0)
\subseteq ((A_1 \cup A_2) \setminus F_0^0) \cup (A_1 \cap A_2 \cap F_0^0) = A.
\]
To prove \eqref{relative graph 2}, note that from \eqref{relative graph 2} for $\gamma_1$ and $\gamma_2$ we have that $F_i^0 G^1 F_i^0 = F_i^1$ for $i = 1$, 2. Then
\[
E^0 G^1 E^0 = (F_1^0 \cup F_2^0) G^1 (F_1^0 \cup F_2^0)
\subseteq E^1 \cup F_1^0 G^1 F_2^0 \cup F_2^0 G^1 F_1^0.
\]
We claim that $F_1^0 G^1 F_2^0$, $F_2^0 G^1 F_1^0 \subseteq E^1$. For suppose, say, that $e \in F_1^0 G^1 F_2^0$. If $r(e) \in F_0^0$ then $e \in F_2^0 G^1 F_2^0 = F_2^1 \subseteq E^1$, by \eqref{relative graph 2} for $\gamma_2$. Similarly, if $s(e) \in F_0$ then $e \in E^1$. If neither of these two situations occurs, then $e \in H_{F_0,F_1} G^1 H_{F_0,F_2}$. Then $r(e) \in F_1^0 \setminus F_0^0 \subseteq G^0 \setminus F_2^0 = H_{F_2,G}$, which is hereditary by \eqref{relative graph 1} for $\gamma_1$. Thus $s(e) \in H_{F_2,G}$, a set disjoint from $H_{F_0,F_2}$. Thus one of the previous two situations must occur, proving the claim. Therefore $E^0 G^1 E^0 = E^1$. This verifies \eqref{relative graph 2} for $\phi$.

For \eqref{relative graph 1}, note first that $H_{E,G} = G^0 \setminus (F_1^0 \cup F_2^0) = (G^0 \setminus F_1^0) \cap (G^0 \setminus F_2^0) = H_{F_1,G} \cap H_{F_2,G}$. Since $H_{F_i,G}$ is hereditary in $G$ for $i = 1$, 2, then $H_{F_1,G} \cap H_{F_2,G}$ is as well. Thus $H_{E,G}$ is hereditary in $G$, verifying \eqref{relative graph 1}. Therefore $\phi$ is a morphism.
\end{proof}

\begin{Remark} \label{rmk general situation}
Let us consider the data involved in a pushout of relative graphs from a larger perspective. Let $F_0 \hookrightarrow F_i$, $i=1,2$, be inclusions of graphs satisfying Definition \ref{definition relative graph}\eqref{relative graph 1} and \eqref{relative graph 2}. For $i = 1,2$ let $A_i \subseteq \reg{F_i^0}$ be arbitrary. Let $A_{12} = (A_1 \cup A_2) \cap F_0^0$. Then if $A_0 \subseteq \reg{F_0^0}$, Definition \ref{definition relative graph}\eqref{relative graph 3} implies that the maps $(F_0,A_0) \to (F_i,A_i)$ are morphisms of relative graphs if and only if $A_{12} \subseteq A_0$. Notice that the pushout, $(E,A)$, defined in the proof of Theorem \ref{thm pushout}, does not depend on the choice of $A_0$ satisfying $A_{12} \subseteq A_0 \subseteq \reg{F_0}$. 
\end{Remark}

\begin{Remark}
Consider the pushout as defined in the proof of Theorem \ref{thm pushout}. We note the following.
\begin{equation}
\label{main theorem remark one} H_{F_1,E} \cap H_{F_2,E} = \varnothing
\end{equation}
Equation \eqref{main theorem remark one} is true because $H_{F_1,E} \cap H_{F_2,E} = (E^0 \setminus F_2^0) \cap (E^0 \setminus F_1^0) = E^0 \setminus (F_2^0 \cup F_1^0) = \varnothing$. 
\end{Remark}

\section{Admissible pushouts of relative graphs} \label{section admissible pushouts of relative graphs}

In order to discuss pushouts of relative Toeplitz graph algebras we recall Pedersen's theorem characterizing pullbacks of $C^*$-algebras in the case relevant to this paper.

\begin{Theorem} \label{theorem pullback}

Let $I$ and $J$ be ideals in a $C^*$-algebra $A$, and consider the commuting square of quotient maps in Figure \ref{figure 1}. This is a pullback diagram if and only if $IJ = 0$.

\end{Theorem}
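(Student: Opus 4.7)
The square in question is the canonical one built from the four quotient maps $\pi_I : A \to A/I$, $\pi_J : A \to A/J$, $q_1 : A/I \to A/(I+J)$, and $q_2 : A/J \to A/(I+J)$. My plan is to compare $A$ directly with the pullback $P = \{(x,y) \in A/I \times A/J : q_1(x) = q_2(y)\}$ via the canonical map $\Phi : A \to P$, $\Phi(a) = (a + I,\, a + J)$, and then to translate the resulting condition into the form $IJ = 0$.

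First I would observe that $\Phi$ is \emph{automatically} surjective, independent of any hypothesis. Given $(a + I, b + J) \in P$, the condition $q_1(a + I) = q_2(b + J)$ says precisely that $a - b \in I + J$, so we may write $a - b = u + v$ with $u \in I$, $v \in J$; the element $c := a - u = b + v$ then satisfies $c + I = a + I$ and $c + J = b + J$, so $\Phi(c) = (a + I,\, b + J)$. Hence the square is a pullback if and only if $\Phi$ is injective, which is exactly the condition $\ker \Phi = I \cap J = 0$.

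Finally I would invoke the standard $C^*$-algebra fact that $IJ = I \cap J$ for any two closed two-sided ideals. The inclusion $IJ \subseteq I \cap J$ is immediate. For the reverse, any $x \in I \cap J$ is approximated by $e_\lambda x$ for an approximate identity $\{e_\lambda\}$ of $I$, and each $e_\lambda x$ lies in $IJ$ because $e_\lambda \in I$ and $x \in J$; thus $x \in \overline{IJ} = IJ$. Consequently $IJ = 0$ if and only if $I \cap J = 0$, and the theorem follows.

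There is no serious obstacle in this argument; the one subtle point is recognizing that the codomain of the cospan is $A/(I+J)$ rather than something smaller, which is exactly what guarantees surjectivity of $\Phi$ without any extra hypothesis. The whole theorem then reduces to identifying $\ker \Phi$ with $I \cap J$ and matching the latter against $IJ$ via the standard approximate-identity argument.
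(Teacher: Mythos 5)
Your argument is correct, and it takes a genuinely different route from the paper. The paper simply invokes Pedersen's characterization of pullback squares (\cite[Proposition 3.1]{ped}), which lists three conditions, and observes that two of them --- the compatibility condition $q_I^{-1}(q_J(A/J)) = \pi(A)$ and the condition $\pi_J(\ker\pi_I) = \ker q_J$ --- hold automatically because all four maps are the canonical quotient maps, leaving only $I \cap J = IJ = \{0\}$. You instead verify the universal property directly by exhibiting the canonical map $\Phi : A \to P$ into the concrete fibered product and showing it is always surjective (precisely because the corner algebra is $A/(I+J)$ and not something smaller, so that $q_1(a+I) = q_2(b+J)$ forces $a - b \in I + J$), so that the square is a pullback exactly when $\ker\Phi = I \cap J$ vanishes; the approximate-identity argument identifying $I \cap J$ with $IJ$ then finishes the proof. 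Your surjectivity computation is in substance the same observation as Pedersen's condition (iii) as verified in the paper, but packaged as a self-contained proof rather than an appeal to the reference. What your approach buys is independence from \cite{ped}; what the paper's buys is brevity. Two small points you are implicitly using and could make explicit: $I + J$ is automatically closed in a $C^*$-algebra (so the decomposition $a - b = u + v$ with $u \in I$, $v \in J$ is available for every element of $\ker q_J q_I$... more precisely of $I+J$), and a bijective $*$-homomorphism is automatically a $C^*$-isomorphism, which is what lets you conclude that bijectivity of $\Phi$ is equivalent to the square being a pullback in the category of $C^*$-algebras.
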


\begin{proof}
This follows easily from \cite[Proposition 3.1]{ped}, as we show here. According to this proposition, the diagram is a pullback if and only if the following three conditions are satisfied:

\begin{enumerate}[(i)]

\item $I \cap J = IJ = \{0\}$,

\item $q_I^{-1}(q_J(A/J)) = \pi(A)$,

\item $\pi_J(\ker \pi_I) = \ker q_J$.

\end{enumerate}

\noindent
Condition (ii) is true since all four maps are surjective. To see that condition (iii) is true, note that $\pi_J(\ker \pi_I) = \pi_J(I) = (I + J)/J = \ker q_J$. Therefore the diagram is a pullback if and only if (i) holds.
\end{proof}

\begin{figure}

\begin{tikzpicture}[scale=3]

\node (0_0) at (0,0) [rectangle] {$A/J$};
\node (0_1) at (0,1) [circle] {$A$};
\node (1_0) at (1,0) [rectangle] {$A/(I + J)$.};
\node (1_1) at (1,1) [circle] {$A/I$};

\draw[-latex,thick] (0_1) -- (0_0) node[pos=0.5, inner sep=0.5pt, left=1pt] {$\pi_J$};
\draw[-latex,thick] (0_1) -- (1_1) node[pos=0.5, inner sep=0.5pt, above=1pt] {$\pi_I$};
\draw[-latex,thick] (1_1) -- (1_0) node[pos=0.5, inner sep=0.5pt, right=1pt] {$q_I$};
\draw[-latex,thick] (0_0) -- (1_0) node[pos=0.5, inner sep=0.5pt, below=1pt] {$q_J$};

\end{tikzpicture}
  \captionof{figure}{}
  \label{figure 1}

\end{figure}

By the definition of the category of relative graphs, a pushout in this category determines, on the one hand, a commuting square of relative Toeplitz graph algebras (the outer square of Figure \ref{figure 3}), and on the other hand, a commuting diagram of quotient $C^*$-algebras (the upper left triangle of Figure \ref{figure 3}), where $I_j = \ker \pi_j$ for $j = 1,2$.
\begin{figure}[h]
\begin{tikzpicture}
\node (E-A) at (0,0) [rectangle] {$\CT C^*(E,A)$};
\draw (E-A) ++(5,0) node (F_1-A_1) {$\CT C^*(F_1,A_1)$};
\draw (E-A) ++(0,-3) node (F_2-A_2) {$\CT C^*(F_2, A_2)$};
\draw (E-A) ++(F_1-A_1) ++(F_2-A_2) node (F_0-A_0) {$\CT C^*(F_0,A_0)$};

\draw (E-A) ++(2.5,-1.5) node (quot) {$\displaystyle \frac{\CT C^*(E,A)}{I_1 + I_2}$};

\draw (F_0-A_0) ++(2.2,0) node (quot2) {$\cong \displaystyle \frac{\CT C^*(E,A)}{I_0}$.};

\begin{scope}[>=latex]
\draw[->, thick] (E-A) -- (F_1-A_1) node[pos=0.5, above] {$\pi_1$};
\draw[->, thick] (E-A) -- (F_2-A_2) node[pos=0.5, left] {$\pi_2$};
\draw[->, thick] (F_1-A_1) -- (F_0-A_0);
\draw[->, thick] (F_2-A_2) -- (F_0-A_0);

\draw[->, thick] (F_1-A_1) -- (quot);
\draw[->, thick] (F_2-A_2) -- (quot);
\end{scope}
\end{tikzpicture}
\captionof{figure}{}
\label{figure 3}
\end{figure}
In this section we first prove that this upper triangle is a pullback diagram of quotient $C^*$-algebras, using Theorem \ref{theorem pullback}. We then give a condition on the pushout that is necessary and sufficient for the inner and outer diagrams to coincide, so that the commuting square is in fact a pullback diagram of relative Toeplitz graph algebras. As in \cite{hrt} we call this condition \textit{admissibility}. Our condition is quite different from the conditions for admissibility in \cite{hrt}. In Example \ref{example compare} we give a detailed comparison in the setting of \cite{hrt}. Generally speaking the main reason behind our approach is that when arbitrary graphs are considered, the breaking vertices give rise to a rich family of gauge-invariant ideals (as described in Theorem \ref{thm larger ideal}). The problem of characterizing those pushout diagrams of graphs that determine pullback diagrams of graph algebras leads naturally to the same problem in the larger context of relative graphs and relative Toeplitz graph algebras. In this setting it seems to us more appropriate to make the requirements of admissibility from \cite{hrt} part of the structure of the category of relative graphs. Thus we use the term \textit{admissible} for the new feature needed due to the interaction between the hereditary sets and the sets used for the relativizations.

\begin{Notation} \label{main proofs notn}

For the rest of this section, let $\alpha_i : (F_0,A_0) \to (F_i,A_i)$ be morphisms of relative graphs, for $i = 1, 2$, and let $(E,A)$ be the pushout. Recall from Remark \ref{rmk general situation} the set $A_{12} = (A_1 \cup A_2) \cap F_0^0$, satisfying $A_{12} \subseteq A_0 \subseteq \reg{F_0^0}$. For the sake of notational convenience we will write $F_{12} := F_0$.
\begin{enumerate}[(1)]
    \item \label{ideal notn} Let $I_i = J(E,A;F_i,A_i)$ for $i = 0, 1, 2, 12$.
    \item \label{u notn} Let $U^{(i)} = G(E,A)^{(0)} \setminus G(F_i,A_i)^{(0)}$ for $i = 0, 1, 2, 12$.
    \item \label{u decomp notn} For each of these four open sets we will make use of the corresponding decomposition as the union of three subsets, as given in the proof of Theorem \ref{thm larger ideal}. That is, for $i = 0,1,2,12$ we have
\[
U^{(i)} = \bigsqcup_{j=1}^3 U^{(i)}_j \\
= E^* H_{F_i,E} E^\infty \sqcup E^*(H_{F_i,E} \setminus A) \sqcup E^*(A_i \setminus A).
\]
\end{enumerate}

\end{Notation}

\begin{Remark} \label{rmk U's and I's}
By \ref{main proofs notn}(\ref{ideal notn}), $\CT C^*(F_i, A_i) = \CT C^*(E,A)/I_i$ for $i=0,1,2,12$. By \ref{main proofs notn}(\ref{u notn}), $I_i$ is generated as an ideal (in $\CT C^*(E,A)$) by $C_0(U^{(i)})$ for $i=0,1,2$, and 12.
\end{Remark}

\begin{Proposition}\label{prop pullback}
$I_1 I_2 = 0,$ and thus by Theorem \ref{theorem pullback}, $\CT C^*(E,A)$ is the pullback of $\CT C^*(F_1,A_1)$ and $\CT C^*(F_2,A_2)$ over $\CT C^*(E,A)/(I_1 + I_2)$.
\end{Proposition}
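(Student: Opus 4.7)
My plan is to invoke Theorem \ref{theorem pullback}, which reduces the proposition to verifying $I_1 I_2 = 0$. In any $C^*$-algebra $I_1 I_2 = I_1 \cap I_2$ (write a positive $a \in I_1 \cap I_2$ as $a^{1/2} a^{1/2}$), so it suffices to show $I_1 \cap I_2 = 0$. By Theorem \ref{thm larger ideal} and Definition \ref{def J(E,A;F,B)}, $I_i = C^*(G(E,A)|_{U^{(i)}})$, and two such ideals intersect in the one associated with the intersection of their open invariant sets. Thus I would reduce the problem to the purely combinatorial statement $U^{(1)} \cap U^{(2)} = \varnothing$.

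To verify this I would use the three-part decomposition recorded in Notation \ref{main proofs notn},
\[
U^{(i)} = U^{(i)}_1 \sqcup U^{(i)}_2 \sqcup U^{(i)}_3 = E^* H_{F_i,E} E^\infty \sqcup E^*(H_{F_i,E} \setminus A) \sqcup E^*(A_i \setminus A),
\]
and check all nine cross-intersections $U^{(1)}_j \cap U^{(2)}_k$. Four of these vanish trivially: $U^{(i)}_1$ contains only infinite paths while $U^{(i)}_2$ and $U^{(i)}_3$ contain only finite paths, so any cross-intersection pairing index $1$ on one side with index $2$ or $3$ on the other is empty. The remaining five cases rest on three elementary observations: $H_{F_1,E} \cap H_{F_2,E} = \varnothing$ (the remark after Theorem \ref{thm pushout}); $A_1 \cap A_2 \subseteq A$ by \eqref{pushout eqn two}; and for $\{i,j\} = \{1,2\}$, $A_i \cap H_{F_j,E} = A_i \setminus F_0^0 \subseteq A$, using that $H_{F_j,E} = F_i^0 \setminus F_0^0$, $A_i \subseteq F_i^0$, and \eqref{pushout eqn two}.

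These observations immediately eliminate the cross-terms $(2,2)$, $(3,3)$, $(2,3)$ and $(3,2)$, each of which reduces to requiring the source vertex of a finite path to lie in an empty set. The only case requiring a graph-theoretic argument is $U^{(1)}_1 \cap U^{(2)}_1$: an infinite path would have to pass through both $H_{F_1,E}$ and $H_{F_2,E}$. Here I would use heredity in $E$ of each $H_{F_i,E}$, so that once an infinite path meets $H_{F_i,E}$ every vertex farther along the path stays in $H_{F_i,E}$; the later of the two encounters then produces a vertex in $H_{F_1,E} \cap H_{F_2,E} = \varnothing$, a contradiction. I expect this heredity step to be the main (if still minor) obstacle. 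Combining the nine checks yields $U^{(1)} \cap U^{(2)} = \varnothing$, whence $I_1 I_2 = I_1 \cap I_2 = 0$, and Theorem \ref{theorem pullback} gives the stated pullback.
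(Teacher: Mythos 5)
Your proposal is correct and follows essentially the same route as the paper: reduce to $U^{(1)} \cap U^{(2)} = \varnothing$ via the three-part decomposition and check the cross-intersections using $H_{F_1,E} \cap H_{F_2,E} = \varnothing$ and equation \eqref{pushout eqn two}. Your explicit heredity argument for the case $U^{(1)}_1 \cap U^{(2)}_1$ (an infinite path meeting both hereditary sets would eventually lie in their intersection) is a detail the paper leaves implicit, and it is exactly the right justification.
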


\begin{proof}
We prove this by showing that $U^{(1)} \cap U^{(2)} = \varnothing$. It suffices to show that $U^{(1)}_i \cap U^{(2)}_j = \varnothing$ for $i \le j$. Since $U^{(i)}_1 \subseteq E^\infty$, and $U^{(j)}_k \subseteq E^*$ for $k > 1$, $U^{(i)}_1 \cap U^{(j)}_k = \varnothing$ for all $i,j$ and all $k > 1$. By equation \eqref{main theorem remark one}, $U^{(1)}_1 \cap U^{(2)}_1 = \varnothing = U^{(1)}_2 \cap U^{(2)}_2$. Next
\[
U^{(1)}_2 \cap U^{(2)}_3
= E^*((H_{F_1,E} \setminus A) \cap (A_2 \setminus A))
= E^*((H_{F_1,E} \cap A_2) \setminus A)
= \varnothing,
\]
since $H_{F_1,E} \cap A_2 = A_2 \setminus F_0^0 \subseteq A$, by equation \eqref{pushout eqn two}. Finally,
\[
U^{(1)}_3 \cap U^{(2)}_3
= F_1^*(A_1 \setminus A) \cap F_2^*(A_2 \setminus A)
= E^*\bigl((A_1 \setminus A) \cap A_2 \setminus A)\bigr)
= E^*\bigl((A_1 \cap A_2) \setminus A\bigr)
= \varnothing,
\]
again by equation \eqref{pushout eqn two}.
\end{proof}

\begin{Proposition} \label{prop pullback 2}

$I_1 + I_2 = I_{12}$, and thus $\CT C^*(E,A) / (I_1 + I_2)$ is a relative Toeplitz graph algebra.

\end{Proposition}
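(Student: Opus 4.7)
The plan is to show that the corresponding open invariant subsets of $G(E,A)^{(0)}$ satisfy $U^{(1)} \cup U^{(2)} = U^{(12)}$. The conclusion $I_1 + I_2 = I_{12}$ then follows because $I_1 + I_2$ is the closed ideal generated by $C_0(U^{(1)}) \cup C_0(U^{(2)})$, and a routine partition-of-unity argument in the locally compact Hausdorff space $G(E,A)^{(0)}$ shows that $C_0(U^{(1)}) + C_0(U^{(2)})$ is dense in $C_0(U^{(1)} \cup U^{(2)})$, so this ideal coincides with $\langle C_0(U^{(1)} \cup U^{(2)}) \rangle$, which by Remark \ref{rmk U's and I's} is $I_{12}$.

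To verify the set equality I would use the three-piece decomposition of each $U^{(i)}$ from Notation \ref{main proofs notn}(\ref{u decomp notn}) and treat the pieces separately. The key structural observation is that
\[
H_{F_{12},E} \;=\; H_{F_0,E} \;=\; E^0 \setminus F_0^0 \;=\; H_{F_1,E} \sqcup H_{F_2,E},
\]
the disjointness being precisely \eqref{main theorem remark one}. Since $E^*(X \cup Y) = E^* X \cup E^* Y$, this immediately yields $U^{(1)}_k \cup U^{(2)}_k = U^{(12)}_k$ for $k = 1$ and $k = 2$.

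The only piece requiring a small computation is $k = 3$, where I must show $(A_1 \setminus A) \cup (A_2 \setminus A) = A_{12} \setminus A$, with $A_{12} = (A_1 \cup A_2) \cap F_0^0$. I would split $(A_1 \cup A_2) \setminus A$ according to membership in $F_0^0$: the part outside $F_0^0$ is $(A_1 \setminus F_0^0) \cup (A_2 \setminus F_0^0)$, which already lies in $A$ by definition \eqref{pushout eqn two} and so contributes nothing; the part inside $F_0^0$ is exactly $A_{12}$. Hence $(A_1 \cup A_2) \setminus A = A_{12} \setminus A$, so $U^{(1)}_3 \cup U^{(2)}_3 = E^*((A_1 \cup A_2) \setminus A) = E^*(A_{12} \setminus A) = U^{(12)}_3$.

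No serious obstacle is anticipated. The proof is really a careful matching between the description of $A$ in \eqref{pushout eqn two} and the three-piece decomposition of the open invariant sets $U^{(i)}$; the disjointness \eqref{main theorem remark one} handles the first two pieces automatically, and the definition of $A$ is engineered precisely so that the third piece collapses correctly onto $A_{12}$.
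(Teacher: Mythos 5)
Your proposal is correct and follows essentially the same route as the paper: both reduce the claim to the set identity $U^{(1)} \cup U^{(2)} = U^{(12)}$, handle the first two pieces via $H_{F_0,E} = H_{F_1,E} \sqcup H_{F_2,E}$, and reduce the third piece to the computation $(A_1 \cup A_2) \setminus A = A_{12} \setminus A$ using the definition of $A$ in \eqref{pushout eqn two}. Your extra remarks (the density of $C_0(U^{(1)}) + C_0(U^{(2)})$ in $C_0(U^{(1)} \cup U^{(2)})$ and the split by membership in $F_0^0$) merely make explicit steps the paper leaves implicit.
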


\begin{proof}
We prove this by showing that $U^{(1)} \cup U^{(2)} = U^{(12)}$. Since $H_{F_1,E} \cup H_{F_2,E} = H_{F_0,E}$ it follows that $U^{(1)}_1 \cup U^{(2)}_1 = U^{(12)}_1$ and $U^{(1)}_2 \cup U^{(2)}_2 = U^{(12)}_2$. Next, from equation \eqref{pushout eqn two} we have that
\[
(A_1 \cup A_2) \setminus A = (A_1 \cup A_2) \cap F_0^0 \setminus A = A_{12} \setminus A.
\]
Then
$U^{(1)}_3 \cup U^{(2)}_3 = E^*\bigl((A_1 \cup A_2) \setminus A\bigr)= E^*(A_{12} \setminus A) = U^{(12)}_3$.
\end{proof}

\begin{Proposition} \label{prop pullback 3}

$I_{12} \subseteq I_0$.

\end{Proposition}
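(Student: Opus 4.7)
The plan is to translate the containment of ideals into a containment of open invariant subsets of the unit space, using Remark \ref{rmk U's and I's}. Since $I_i$ is generated by $C_0(U^{(i)})$, it suffices to show $U^{(12)} \subseteq U^{(0)}$ (then $C_0(U^{(12)}) \subseteq C_0(U^{(0)}) \subseteq I_0$, and the ideal generated by $C_0(U^{(12)})$ is contained in $I_0$).

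To establish $U^{(12)} \subseteq U^{(0)}$, I will use the three-part decomposition from Notation \ref{main proofs notn}(\ref{u decomp notn}):
\[
U^{(i)} = E^* H_{F_i,E} E^\infty \sqcup E^*(H_{F_i,E} \setminus A) \sqcup E^*(A_i \setminus A).
\]
The key observation is that $F_{12} = F_0$ by convention, so $H_{F_{12},E} = H_{F_0,E}$. Hence the first two pieces $U^{(12)}_1$ and $U^{(12)}_2$ agree verbatim with $U^{(0)}_1$ and $U^{(0)}_2$, and only the third piece requires comparison.

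For the third piece, the containment $U^{(12)}_3 \subseteq U^{(0)}_3$ reduces to $A_{12} \setminus A \subseteq A_0 \setminus A$, which follows from $A_{12} \subseteq A_0$. This last inclusion is precisely the assertion recorded in Remark \ref{rmk general situation} as a necessary hypothesis for the maps $(F_0, A_0) \to (F_i, A_i)$ to qualify as morphisms of relative graphs (it is equivalent to Definition \ref{definition relative graph}\eqref{relative graph 3} applied to $\alpha_1$ and $\alpha_2$ simultaneously). Combining the three inclusions gives $U^{(12)} \subseteq U^{(0)}$, and hence $I_{12} \subseteq I_0$.

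There is no real obstacle here; the statement is essentially a bookkeeping consequence of the fact that enlarging the relativization set shrinks the corresponding invariant subset of $G(E)^{(0)}$, and that $F_{12}$ was defined to coincide with $F_0$. The proof amounts to one line once the decomposition of $U^{(i)}$ from Theorem \ref{thm larger ideal} is in hand.
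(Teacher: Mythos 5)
Your argument is correct and is essentially identical to the paper's proof: both reduce the ideal containment to $U^{(12)} \subseteq U^{(0)}$ via Remark \ref{rmk U's and I's}, and both obtain that inclusion from $A_{12} \subseteq A_0$ together with the decomposition in Notation \ref{main proofs notn}\eqref{u decomp notn} and the identity $F_{12} = F_0$. You have merely written out the bookkeeping that the paper leaves implicit.
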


\begin{proof}
Since $A_{12} \subseteq A_0$, Notation \ref{main proofs notn}\eqref{u decomp notn} implies that $U^{(12)} \subseteq U^{(0)}$. This is equivalent to the containment of the proposition.
\end{proof}

\begin{Definition} \label{def admissible}

Let $\alpha_i : (F_0,A_0) \to (F_i,A_i)$ be morphisms of relative graphs, for $i = 1$, 2, and let $(E,A)$ be the pushout relative graph as in Definition \ref{definition relative graph}. The pair $(\alpha_1,\alpha_2)$ is called \textit{admissible} if 
\begin{equation} \label{eqn admissible}
A_0 \subseteq A_1 \cup A_2.
\end{equation}

\end{Definition}

\begin{Remark}

We wish to give some idea of what this condition means and why it is crucial for the pullback construction. Explicitly equation \eqref{eqn admissible} requires that if the Cuntz-Krieger condition is imposed at a vertex $v$ in $\CT C^*(F_0,A_0)$ then it is necessary to impose it at $v$ in at least one of $\CT C^*(F_i,A_i)$, $i = 1,2$ (and in particular, $v$ must be regular in at least one of $F_1$ and $F_2$). Let us also try to give a more fundamental explanation. For $i = 0,1,2$ let $I_i$ be the kernel of the quotient map of $\CT C^*(E,A)$ onto $\CT C^*(F_i,A_i)$. Let $v \in A_0$. Let us write $D_0 = v F_0^1$, $D_i = v (F_i^1 \setminus F_0^1)$ for $i = 1,2$, $q_0 = \sum_{e \in D_0} s_e s_e^*$, and (heuristically) $q_i = \sum_{e \in D_i} s_e s_e^*$. With the groupoid picture in mind we let $\chi_{\{v\}} = p_v - q_0 - q_1 - q_2$. Since $v \in A_0$ we have $p_v - q_0 = p_v - \sum_{e \in v F_0^1} s_e s_e^* = 0$ in $\CT C^*(F_0,A_0)$. Therefore $\chi_{\{v\}} + q_1 + q_2 = p_v - q_0 \in I_0$. We need that it also belong to $I_1 + I_2$ (as required by Theorem \ref{theorem pullback}).

First suppose that $v \in \sing{E}$. Then $v$ must be singular in at least one of $F_1$ and $F_2$. Suppose for definiteness that $v \in \sing{F_1}$. Then $v \not\in A_1$, since $A_1 \subseteq \reg{F_1}$. For $e \in D_2$ we know that $s(e) \in H_{F_1,E}$, and hence $s_e \in I_1$. Thus if $v \in \reg{F_2}$ then $q_2 \in I_1$. If in addition $v \in A_2$ then $q_1 = p_v - q_0 - q_2 \in I_2$, and it follows that $p_v - q_0 \in I_1 + I_2$. But if $v \not\in A_2$, or if $v \in \sing{F_2}$, i.e. if \eqref{eqn admissible} fails, then $\chi_{\{v\}} + q_1 + q_2$ cannot be apportioned between $I_1$ and $I_2$.

Next suppose that $v \in \reg{E}$. We use the same notations as above. Again, since $v \in A_0$ we have $\chi_{\{v\}} + q_1 + q_2 = p_v - q_0 \in I_0$. Since $v \in \reg{E}$ we know that $D_1$ and $D_2$ are finite, so $q_1$ and $q_2$ are not ``heuristic''. Therefore $q_1 \in I_2$ and $q_2 \in I_1$, by the same reasoning used for $q_2$ previously. If $v$ is in $A_1$ or in $A_2$, it follows as before that $p_v - q_0 \in I_1 + I_2$. But again, if $v \not\in A_1 \cup A_2$ then this is not possible.
\end{Remark}

\begin{Theorem} \label{thm main}
Let $\alpha_i : (F_0,A_0) \to (F_i,A_i)$ be morphisms of relative graphs, for $i = 1$, 2, and let $(E,A)$ be the pushout. We use the notation of \ref{main proofs notn}. The following statements are equivalent:

\begin{enumerate}[(a)]

\item \label{thm main 1} The commuting square of relative Toeplitz graph $C^*$-algebras corresponding to the pushout of $(\alpha_1,\alpha_2)$ is a pullback. (This refers to the outer square in Figure \ref{figure 3}.)
\item \label{thm main 2} $(\alpha_1,\alpha_2)$ is admissible.
\item \label{thm main 3} $A_0 = A_{12}$.
\item \label{thm main 4} $I_0 = I_{12}$.
\item \label{thm main 5} $A_0 \cap \sing{F_1} \subseteq A_2$, $A_0 \cap \sing{F_2} \subseteq A_1$, and $A_0 \cap \reg{E} \subseteq A_1 \cup A_2$.
\end{enumerate}

\end{Theorem}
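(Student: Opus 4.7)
The plan is to establish the chain $(a) \Leftrightarrow (d) \Leftrightarrow (c) \Leftrightarrow (b) \Leftrightarrow (e)$. The key infrastructure is already in place: Propositions \ref{prop pullback} and \ref{prop pullback 2}, combined with Theorem \ref{theorem pullback}, show that the commuting square obtained by replacing the bottom-right corner of Figure \ref{figure 3} with $\CT C^*(E,A)/(I_1 + I_2) = \CT C^*(E,A)/I_{12}$ is automatically a pullback. The outer square and this inner square share their other three corners, and the canonical surjection $\CT C^*(E,A)/I_{12} \twoheadrightarrow \CT C^*(E,A)/I_0 = \CT C^*(F_0,A_0)$ (coming from $I_{12} \subseteq I_0$, Proposition \ref{prop pullback 3}) factors the natural comparison map from $\CT C^*(E,A)$ into the outer-square pullback through the inner-square pullback. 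Since the inner comparison is already an isomorphism, the outer square is a pullback if and only if the canonical surjection is itself an isomorphism, i.e. $I_0 = I_{12}$. This yields $(a) \Leftrightarrow (d)$.

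For $(d) \Leftrightarrow (c) \Leftrightarrow (b)$, I would work at the level of the unit space. By Theorem \ref{thm larger ideal}, $I_0$ and $I_{12}$ are the ideals generated by $C_0(U^{(0)})$ and $C_0(U^{(12)})$, so $(d)$ is equivalent to $U^{(0)} = U^{(12)}$. Because $F_0 = F_{12}$, the first two summands in the decomposition of Notation \ref{main proofs notn}\eqref{u decomp notn} coincide for $i=0$ and $i=12$, reducing the comparison to $E^*(A_0 \setminus A) = E^*(A_{12} \setminus A)$. Since vertices are length-zero paths, $v \in E^*X \cap E^0$ iff $v \in X$, so this is equivalent to $A_0 \setminus A = A_{12} \setminus A$. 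Together with $A_{12} \subseteq A_0$ (Remark \ref{rmk general situation}) and the computation $A \cap F_0^0 = A_1 \cap A_2$ from equation \eqref{pushout eqn two} (so $A_0 \setminus A_{12} \subseteq F_0^0 \setminus (A_1 \cup A_2)$ is disjoint from $A$), this collapses to $A_0 = A_{12}$, which is $(c)$. The step $(c) \Leftrightarrow (b)$ is purely set-theoretic: since $A_0 \subseteq F_0^0$, admissibility $A_0 \subseteq A_1 \cup A_2$ is equivalent to $A_0 \subseteq (A_1 \cup A_2) \cap F_0^0 = A_{12}$, which combined with $A_{12} \subseteq A_0$ is precisely $(c)$.

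For $(b) \Leftrightarrow (e)$, I would observe that any $v \in A_0 \subseteq F_0^0 = F_1^0 \cap F_2^0$ lies in at least one of $\sing{F_1}$, $\sing{F_2}$, or $\reg{F_1} \cap \reg{F_2}$. In the last case $vE^1 = vF_1^1 \cup vF_2^1$ is finite, and nonempty because $v \in \reg{F_0}$ forces $vF_0^1 \neq \varnothing$; hence $v \in \reg{E}$. The three conditions of $(e)$ then cover the three cases exactly, and in each singular case $A_i \subseteq \reg{F_i}$ forces $v$ into the other $A_j$, yielding both directions of the equivalence.

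The main obstacle I anticipate is the set-theoretic bookkeeping in $(d) \Leftrightarrow (c)$: one has to track the three summands of $U^{(i)}$, notice the simplifications coming from $F_0 = F_{12}$, and carefully verify via equation \eqref{pushout eqn two} that $A_0 \setminus A_{12}$ avoids $A$. Once this identification is cleanly in hand, the remaining equivalences reduce to short manipulations with set unions and intersections, combined with a standard application of the universal property of pullbacks.
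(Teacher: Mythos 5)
Your proposal is correct and follows essentially the same route as the paper: it reduces (a) to the ideal equality $I_0=I_{12}$ via Theorem \ref{theorem pullback} and Propositions \ref{prop pullback}--\ref{prop pullback 3}, translates that equality into $U^{(0)}=U^{(12)}$ and hence $A_0=A_{12}$ using the decomposition in Notation \ref{main proofs notn}, and handles (e) by the same case analysis on singularity in $F_1$ and $F_2$. The only differences are cosmetic (the order of the chain of equivalences, and your slightly more explicit justification that a pullback outer square forces $I_0\subseteq I_{12}$), so no further changes are needed.
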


\begin{proof}
\textit{\eqref{thm main 2} $\Leftrightarrow$ \eqref{thm main 3}:} Recall from Notation \ref{main proofs notn} that $A_{12} = (A_1 \cup A_2) \cap F_0^0 \subseteq A_0 \subseteq F_0^0$. If $(\alpha_1,\alpha_2)$ is admissible then $A_0 \subseteq A_1 \cup A_2$. Since $A_0 \subseteq F_0^0$ it follows that $A_0 \subseteq A_{12}$, hence $A_0 = A_{12}$. Conversely, if $A_0 = A_{12}$ then since $A_{12} \subseteq A_1 \cup A_2$ it follows that $(\alpha_1,\alpha_2)$ is admissible.

\textit{\eqref{thm main 1} $\Leftrightarrow$ \eqref{thm main 4}:} By Theorem \ref{theorem pullback} and Figure \ref{figure 3}, \eqref{thm main 1} is equivalent to the equality $I_1 + I_2 = I_0$. By Proposition \ref{prop pullback 2} this is equivalent to \eqref{thm main 4}.

\noindent
\textit{\eqref{thm main 3} $\Leftrightarrow$ \eqref{thm main 4}:} This follows from Remark \ref{rmk U's and I's}.

\noindent
\textit{\eqref{thm main 3} $\Leftrightarrow$ \eqref{thm main 5}:} Suppose that \eqref{thm main 3} holds. Let $v \in A_0$. Then $v \in A_i \subseteq \reg{F_i}$ for $i=1$ or $i=2$. If $v \in \sing{F_1}$ then $v \not\in A_1$, hence $v \in A_2$. Similarly, if $v \in \sing{F_2}$ then $v \in A_1$. The third condition is immediate. Next suppose that \eqref{thm main 5} holds. Let $v \in A_0$. Since $\sing{E} \subseteq \sing{F_1} \cup \sing{F_2}$, if $v \in \sing{E}$ then the first two conditions in \eqref{thm main 5} imply that $v \in A_2 \cup A_1$. If $v \in \reg{E}$ then the third condition implies that $v \in A_1 \cup A_2$.
\end{proof}

\section{Examples} \label{section examples}

We first consider the two extreme possibilities. Let $F_0 \hookrightarrow F_i$ for $i = 1,2$, and let $E$ be as in \eqref{pushout eqn one}.

\begin{Example}

Let $A_i = \varnothing$ for $i = 0,1,2$. Definition \ref{definition relative graph}\eqref{relative graph 3} reduces to $\varnothing \subseteq \varnothing$, so we do indeed have morphisms of relative graphs. It is clear that equation \eqref{pushout eqn two} holds, so we obtain a pushout diagram in the category of relative graphs. The corresponding commuting square of $C^*$-algebras consists of Toeplitz graph algebras (see Figure \ref{figure 4}).
\begin{figure}[h]
\begin{tikzpicture}[scale=2.5]

\node (0_0) at (0,0) [rectangle] {$\CT C^*(F_2)$};
\node (0_1) at (0,1) [rectangle] {$\CT C^*(E)$};
\node (1_0) at (1,0) [rectangle] {$\CT C^*(F_0)$.};
\node (1_1) at (1,1) [rectangle] {$\CT C^*(F_1)$};

\draw[-latex,thick] (0_1) -- (0_0);
\draw[-latex,thick] (0_1) -- (1_1);
\draw[-latex,thick] (1_1) -- (1_0);
\draw[-latex,thick] (0_0) -- (1_0);

\end{tikzpicture}
\captionof{figure}{}
\label{figure 4}
\end{figure}
The condition in Definition \ref{def admissible} reduces to $\varnothing \subseteq \varnothing$, so we know that this is a pullback diagram of $C^*$-algebras. (This is a special case of \cite[Theorem 3.3]{kpsw}.)

\end{Example}

\begin{Example} \label{example CK algebras}

Let $A_i = \reg{F_i}$ for $i = 0,1,2$. As noted in Corollary \ref{cor kernel of quotient map}, if the inclusion $(F_0,\reg{F_0}) \hookrightarrow (F_i,\reg{F_i})$ is a morphism then $H_{F_0,F_i}$ must be saturated in $F_i$. It then follows that $H_{F_i,E}$ is saturated in $E$ for $i = 1,2$. We must check Definition \ref{definition relative graph}\eqref{relative graph 3}. First, if $v \in \reg{E} \cap F_1^0$, the fact that $H_{F_1,E}$ is saturated in $E$ implies that $v$ is not a source in $F_1^0$. Therefore $v \in \reg{F_1}$. Similarly we have $\reg{E} \cap F_2^0 \subseteq \reg{F_2}$. Next, if $v \in \reg{F_1} \cap F_0^0$, then the fact that $H_{F_0,F_1} = H_{F_2,E}$ is saturated in $F_1$ implies that $v$ is not a source in $F_0$. Therefore $v \in \reg{F_0}$, and similarly, $\reg{F_2} \cap F_0^0 \subseteq \reg{F_0}$. Thus $(F_i,\reg{F_i}) \hookrightarrow (E,\reg{E})$ are, in fact, morphisms of relative graphs. Next we check equation \eqref{pushout eqn two} to verify that we have a pushout diagram in the category of relative graphs. We must show that $\reg{E} = (\reg{F_1} \setminus F_0^0) \cup (\reg{F_2} \setminus F_0^0) \cup (\reg{F_1} \cap \reg{F_2})$. For $\supseteq$, note that $\reg{F_1} \setminus F_0^0 \subseteq F_1^0 \setminus F_0^0 = H_{F_0,F_1} = H_{F_2,E}$. So if $v \in \reg{F_1} \setminus F_0^0$, then by the hereditary property of $H_{F_2,E}$, $v F_2^1 = \varnothing$, i.e. $v F_1^1 = v E^1$, and hence $v \in \reg{E}$. Similarly we have that $\reg{F_2} \setminus F_0^0 \subseteq \reg{E}$. Finally, it is clear that $\reg{F_1} \cap \reg{F_2} \subseteq \reg{E}$. For $\subseteq$, let $v \in \reg{E}$. If $v \in H_{F_1,E}$ then $v E^1 = v F_1^1$, so that $v \in \reg{F_1} \setminus F_0^0$. Similarly, if $v \in H_{F_2,E}$ then $v \in \reg{F_2} \setminus F_0^0$. Finally, let $v \in F_0^0$. If, say, $v \not\in \reg{F_1}$, then it must be a source in $F_1$. This means that $v E^1 = v F_2^1 H_{F_1,E}$. By the saturation property, $v \in H_{F_1,E}$, a contradiction. Thus $v \in \reg{F_1}$, and a similar argument shows that $v \in \reg{F_2}$.

Now we consider admissibility. Theorem \ref{thm main}\eqref{thm main 5} becomes: $\reg{F_0} \cap \sing{F_1} \subseteq \reg{F_2}$, $\reg{F_0} \cap \sing{F_2} \subseteq \reg{F_1}$, and $\reg{F_0} \cap \reg{E} \subseteq \reg{F_1} \cup \reg{F_2}$. The third of these is automatically true, while the first two are equivalent to each other, and to the condition $\sing{F_1} \cap \sing{F_2} \cap \reg{F_0} = \varnothing$. In words, a vertex that is regular in $F_0$ cannot receive infinitely many edges from both of $H_{F_1,E}$ and $H_{F_2,E}$ (or equivalently, a vertex cannot be breaking for both of $H_{F_1,E}$ and $H_{F_2,E}$). Thus we find that the commuting square of graph algebras in Figure \ref{figure 5} is a pullback diagram if and only if $\sing{F_1} \cap \sing{F_2} \cap \reg{F_0} = \varnothing$.
\begin{figure}
\begin{tikzpicture}[scale=2.5]

\node (0_0) at (0,0) [rectangle] {$C^*(F_2)$};
\node (0_1) at (0,1) [rectangle] {$C^*(E)$};
\node (1_0) at (1,0) [rectangle] {$C^*(F_0)$.};
\node (1_1) at (1,1) [rectangle] {$C^*(F_1)$};

\draw[-latex,thick] (0_1) -- (0_0);
\draw[-latex,thick] (0_1) -- (1_1);
\draw[-latex,thick] (1_1) -- (1_0);
\draw[-latex,thick] (0_0) -- (1_0);

\end{tikzpicture}
\captionof{figure}{}
\label{figure 5}
\end{figure}
We remark that if we let $A_0 = \reg{F_0} \setminus (\sing{F_1} \cap \sing{F_2})$, then the diagram becomes that of Figure \ref{figure 6}, which is a pullback.
\begin{figure}
\begin{tikzpicture}[scale=2.5]
\node (0_0) at (0,0) [rectangle] {$C^*(F_2)$};
\node (0_1) at (0,1) [rectangle] {$C^*(E)$};
\node (1_0) at (1,0) [rectangle] {$\CT C^*(F_0,A_0)$.};
\node (1_1) at (1,1) [rectangle] {$C^*(F_1)$};
\draw[-latex,thick] (0_1) -- (0_0);
\draw[-latex,thick] (0_1) -- (1_1);
\draw[-latex,thick] (1_1) -- (1_0);
\draw[-latex,thick] (0_0) -- (1_0);
\end{tikzpicture}
\captionof{figure}{}
\label{figure 6}
\end{figure}

\end{Example}

\begin{Example} \label{example compare}

We now compare our notion of admissible with that in \cite{hrt}. We describe the differences in our approaches. The situation in \cite{hrt} is of a graph $E$ and two subgraphs $F_1$ and $F_2$. In order to have a pushout of graphs it is necessary to assume that $E = F_1 \cup F_2$. Then $E$ is the pushout of the diagram $F_1 \cap F_2 \hookrightarrow F_i$, $i = 1,2$. In order that the incusions $F_i \hookrightarrow E$ define quotient maps $C^*(E) \to C^*(F_i)$ it is necessary that $E^0 \setminus F_i^0$ be saturated and hereditary in $E$ for $i = 1,2$. In order that the inclusions $F_1 \cap F_2 \hookrightarrow F_i$ define quotient maps $C^*(F_i) \to C^*(F_1 \cap F_2)$ it is necessary that $F_i^0 \setminus (F_1^0 \cap F_2^0) = E^0 \setminus F_j^0$ be saturated and hereditary in $F_i$, for $i = 1,2$ and $i \not= j$. In fact, the quotient of $C^*(F_i)$ by the ideal corresponding to the saturated hereditary set $F_i^0 \setminus (F_1^0 \cap F_2^0)$ equals the algebra of the graph $F_0$ defined by $F_0^0 = F_1^0 \cap F_2^0$ and $F_0^1 = F_0^0 E^1 F_0^0$. Thus it is necessary that $F_1 \cap F_2 = F_0$. In order to impose these requirements, the pair $F_1$ and $F_2$ is called \textit{admissible} (\cite[Definition 2.1]{hrt}) if, adjusting for the Australian convention, and using the graph $F_0$ defined above,

\begin{enumerate}[(1)]

\item $E = F_1 \cup F_2$

\item $\source{F_1 \cap F_2} \subseteq \source{F_1} \cap \source{F_2}$

\item $F_1^1 \cap F_2^2 = F_i^1F_0^0$, (equivalently, $F_1^1 \cap F_2^1 = E^1 F_0^0$, as is easily checked)

\item $F_i^0 \setminus F_0^0$ has no breaking vertices in $E$ or in $F_i$, $i = 1,2$.

\end{enumerate}

Given (1), \cite[Lemma 2.2]{hrt} implies that (3) is equivalent to the hereditary property of $F_i^0 \setminus F_0^0$ in $F_i$ (and hence in $E$) together with the condition $F_1 \cap F_2 = F_0$ mentioned above. Furthermore, \cite[Lemma 2.3]{hrt} shows that (2) implies $F_i^0 \setminus F_0^0$ is saturated in $F_i$, $i = 1,2$. Given (1) it also follows that $F_i^0 \setminus F_0^0$ is saturated in $E$. We remark that (2) is stronger than needed for saturation. It may be replaced by the weaker condition: (2)$'$ $\source{F_1 \cap F_2} \subseteq \sing{F_1} \cap \sing{F_2}$. In fact, (2)$'$ is equivalent to the saturation of $F_i^0 \setminus F_0^0$ in $F_i$, $i = 1,2$.

In the present paper we begin with two graphs, $F_1$ and $F_2$, and a common subgraph $F_0$. We \textit{construct} the graph $E$ as the pushout of the diagram $F_0 \hookrightarrow F_i$, $i = 1,2$. Then the condition $F_0 = F_1 \cap F_2$ holds by definition. We include the requirement that $F_i^0 \setminus F_0^0$ be hereditary in $F_i$ as part of our definition of morphism of (relative) directed graphs. Thus (1),(2)$'$,(3) are equivalent to our setup (in the special case that $A_i = \reg{F_i}$ for $i = 0,1,2$). Since we allow the graphs to have breaking vertices, we do not include a version of (4). Instead, our more general context requires our version of admissibility, namely, $\reg{F_0} \subseteq \reg{F_1} \cup \reg{F_2}$. Equivalently, a vertex in $F_0$ cannot be a breaking vertex for $F_i^0 \setminus F_0^0$ in $F_i$ for both $i = 1,2$ (as in Example \ref{example CK algebras}). We give the following examples.

\end{Example}

\begin{Example}

\begin{figure}
\begin{tikzpicture}

\node (0_0) at (0,0) [rectangle] {$u$};
\node (1_0) at (1.7,0) [rectangle] {$w,$};
\node(m1_0) at (-1.7,0) [rectangle] {$v$};

\draw[thick,->] (-1.5,0) to node[above]{$e_i$} (-.2,0);
\draw[thick,->] (1.5,0) to node[above]{$f_i$} (.2,0);

\draw[->,thick] (0,-.75) .. controls (.5,-.75) and (.5,-.3) .. (0_0) node[pos=0, inner sep=0.5pt, anchor=north] {$d$};

\draw[thick] (0,-.75) .. controls (-.5,-.75) and (-.5,-.3) .. (0_0);

\node (more) at (3.5,0) [rectangle] {$i = 1,2,\ldots$};

\end{tikzpicture}
\captionof{figure}{}
\label{figure 7}
\end{figure}

Let $E$ be the graph in Figure \ref{figure 7}. Let $F_0,F_1,F_2$ be the graphs defined by
\begin{align*}
&F_0^0 = \{u\},\ F_0^1 = \{d\} \\
&F_1^0 = \{u,v\},\ F_1^1 = \{d\} \cup \{e_i : i \in \IN\} \\
&F_2^0 = \{u,w\},\ F_2^1 = \{d\} \cup \{f_i : i \in \IN\}.
\end{align*}
Here $H_{F_0,F_1} = \{v\}$, $H_{F_0,F_2} = \{w\}$, and $u$ is a breaking vertex in $F_i$ for $H_{F_0,F_i}$, $i = 1,2$. Since there is a breaking vertex, this is not an admissible pushout according to \cite{hrt}. In this paper, the pushout diagram is not admissible because $u \in \reg{F_0} = A_0$ but $u \not\in \reg{F_i} = A_i$ for $i = 1,2$. Theorem \ref{thm main} implies that the corresponding diagram of graph $C^*$-algebras is not a pullback.

\end{Example}

\begin{Example}

\begin{figure}
\begin{tikzpicture}

\node (0_0) at (0,0) [rectangle] {$u$};
\node (1_0) at (1.7,0) [rectangle] {$w$,};
\node(m1_0) at (-1.7,0) [rectangle] {$v$};

\draw[thick,->] (-1.5,0) to node[above]{$e$} (-.2,0);
\draw[thick,->] (1.5,0) to node[above]{$f_i$} (.2,0);

\draw[->,thick] (0,-.75) .. controls (.5,-.75) and (.5,-.3) .. (0_0) node[pos=0, inner sep=0.5pt, anchor=north] {$d$};

\draw[thick] (0,-.75) .. controls (-.5,-.75) and (-.5,-.3) .. (0_0);

\node (more) at (3.5,0) [rectangle] {$i = 1,2,\ldots$};

\end{tikzpicture}
\captionof{figure}{}
\label{figure 8}
\end{figure}

Let $E$ be the graph in Figure \ref{figure 8}. Let $F_0,F_1,F_2$ be the graphs defined by
\begin{align*}
&F_0^0 = \{u\},\ F_0^1 = \{d\} \\
&F_1^0 = \{u,v\},\ F_1^1 = \{d,e\} \\
&F_2^0 = \{u,w\},\ F_2^1 = \{d\} \cup \{f_i : i \in \IN\}.
\end{align*}
Again, $H_{F_0,F_1} = \{v\}$, $H_{F_0,F_2} = \{w\}$. This time $u$ is a breaking vertex for $H_{F_0,F_2}$ in $F_2$, but not for $H_{F_0,F_1}$ in $F_1$. Since there is a breaking vertex, this is not an admissible pushout according to \cite{hrt}. In this paper, the pushout diagram is admissible because $u \in \reg{F_0} \cap \reg{F_1}$. Therefore Theorem \ref{thm main} implies that the corresponding diagram of graph algebras is a pullback.

\end{Example}


\begin{thebibliography}{99}

\bibitem{aas} G. Abrams, P. Ara, and M. Siles Molina, \textit{Leavitt Path Algebras}, Lecture Notes in Math. 2191, Springer, London, 2017.

\bibitem{bhrs} T. Bates, J. Hong, I. Raeburn, W. Szyma\'{n}ski, The ideal structure of the $C^*$-algebras of infinite graphs, Illinois J. Math., vol. 46 (2002), no. 4, 1159-1176.

\bibitem{bonli} C. B\"onicke and K. Li, \textit{Ideal structure and pure infiniteness of ample groupoid $C^*$-algebras}, Ergod. Thy. \& Dynam. Sys., 40 (2020), 34-63.

\bibitem{bra} O. Bratteli, \textit{Inductive limits of finite dimensional $C^*$-algebras}, Transactions AMS 176 (1972), 195-234.

\bibitem{ek} S. Eilers and T. Katsura, \textit{Semiprojectivity and properly infinite projections in graph $C^*$-algebras}, Adv. Math., 317 (2017), 108-156.

\bibitem{hrt} P. Hajac, S. Reznikoff and M. Tobolski, \textit{Pullbacks of graph $C^*$-algebras from admissible pushouts of graphs}, Quantum dynamics, 169-178, Banach Center Publ., 120, Polish Acad. Sci. Inst. Math., Warsaw, 2020.

\bibitem{kpsw} A. Kumjian, D. Pask, A. Sims and M. F. Whittaker, \textit{Topological spaces associated to higher-rank graphs}, J. Combinatorial Theory, Series A 143, 2016, 19-41.

\bibitem{mt} P. Muhly and M. Tomforde, \textit{Adding tails to $C^*$-correspondences}, Documenta Math. 9, 2004, 79-106.

\bibitem{ped} G. Pedersen, Pullback and pushout construtions in $C^*$-algebra theory, J. Functional Analysis 167 no. 2 (1999), 243-344.

\bibitem{rae} I. Raeburn, \textit{Graph Algebras}, CBMS Regional Conference Series in Mathematics, vol. 103, published for the Conference Board of the Matematical Sciences, Washington, D.C. 2005.

\bibitem{ren1} J. Renault, \textit{A Groupoid Approach to $C^*$-Algebras}, Lecture Notes in Mathematics 793, Springer-Verlag, Berlin Heidelberg New York, 1980.

\bibitem{ren} J. Renault, \textit{The ideal structure of groupoid crossed product $C^*$-algebras}, J. Operator Theory 25 (1991), 3-36.

\bibitem{sims} A. Sims, \textit{Relative Cuntz-Krieger algebras of finitely aligned higher-rank graphs}, Indiana University Math. J., vol. 55, no. 2 (2006), 849-868.

\bibitem{sww} A. Sims, B. Whitehead, M. F. Whittaker, \textit{Twisted $C^*$-algebras associated to finitely aligned higher-rank graphs}, Doc. Math. 19, 2014, 831-866.


\bibitem{spi1} J. Spielberg, \textit{A functorial approach to the $C^*$-algebras of a graph}, International J. Math. \textbf{13} (2002), no. 3, 245-277.

\bibitem{spi2} J. Spielberg, \textit{Groupoids and $C^*$-algebras for categories of paths}, Trans. AMS. \textbf{366} (2014), no. 11, 5771-5819.

\end{thebibliography}
\end{document}